\documentclass{amsart}
\usepackage{amsmath,amssymb,amsthm}
\usepackage{bm}
\usepackage{graphicx} % Required for inserting images
\usepackage{hyperref}

\def\IND{\setbox0=\hbox{$x$}\kern\wd0\hbox to 0pt{\hss$\mid$\hss}
	\lower.9\ht0\hbox to 0pt{\hss$\smile$\hss}\kern\wd0}
\def\NotIND{\setbox0=\hbox{$x$}\kern\wd0\hbox to 0pt{\mathchardef\nn=12854\hss$\nn$\kern1.4\wd0\hss}\hbox to 0pt{\hss$\mid$\hss}\lower.9\ht0 \hbox to 0pt{\hss$\smile$\hss}\kern\wd0}

\def\ind{\mathop{\mathpalette\IND{}}}
\def\nind{\mathop{\mathpalette\NotIND{}}}

\def\indi#1{\mathop{\mathpalette\IND{}^{\!\!\!\!\rlap{$\scriptstyle{#1}$}\,\,\,\,}}}

\def\nindi#1{\mathop{\mathpalette\NotIND{}^{\!\!\!\rlap{$\scriptstyle{#1}$}\,\,\,}}}

\theoremstyle{plain}
\newtheorem{thm}{Theorem}[section]
\newtheorem{lemma}[thm]{Lemma}
\newtheorem{fact}[thm]{Fact}
\newtheorem{prop}[thm]{Proposition}
\newtheorem{cor}[thm]{Corollary}
\newtheorem{maintheorem}{Theorem}

\theoremstyle{definition}
\newtheorem{defi}[thm]{Definition}
\newtheorem*{nota}{Notation}
\newtheorem*{exmp}{Example}

\theoremstyle{remark}
\newtheorem{rmk}[thm]{Remark}

\newcommand{\mR}[1]{\textup{MR}(#1)}

\newcommand{\MRP}[1]{\textup{MR}_P(#1)}
\newcommand{\SUP}[1]{\textup{SU}_P(#1)}
\newcommand{\td}[1]{\textup{trdeg}(#1)}

\newcommand{\A}{\mathbb{A}}
\newcommand{\B}[1]{B_P(#1)}
\newcommand{\eq}{^\textup{eq}}

\newcommand{\tp}[1]{\textup{tp}(#1)}
\newcommand{\tP}[1]{\textup{tp}_{L_P}(#1)}
\newcommand{\acl}[1]{\textup{acl}(#1)}
\newcommand{\dcl}[1]{\textup{dcl}(#1)}
\newcommand{\aclP}[1]{\textup{acl}_P(#1)}
\newcommand{\dclP}[1]{\textup{dcl}_P(#1)}
\newcommand{\aeq}[1]{\textup{acl}^\textup{eq}_P(#1)}
\newcommand{\deq}[1]{\textup{dcl}^\textup{eq}_P(#1)}
\newcommand{\Cb}[1]{\textup{Cb}(#1)}
\newcommand{\Pind}{P\text{-independent}}
\newcommand{\R}[1]{\textup{gR}(#1)}

\newcommand{\code}[1]{\lceil #1\rceil}

\title{Rank and Independence of Imaginaries in Proper Pairs of ACF}
\author{Zixuan Zhu}
\thanks{The author has been supported by the China Scholarship Council, the Deutsche Forschungsgemeinschaft (DFG, German Research Foundation) under the Excellence Strategy EXC 2044–390685587, Mathematics Münster: Dynamics–Geometry–Structure, and via the DFG project 495759320 Modelltheorie bewerteter Körper mit Endomorphismus.}
\address{Department for Mathematical Logic and Foundational Research, University of Münster}
\email{zixuan.zhu@uni-muenster.de}
\begin{document}
	\begin{abstract}
		Let $T_P$ be the theory of beautiful pairs of algebraically closed fields of fixed characteristic. It is known that for real tuples in models of $T_P$, SU-rank coincides with Morley rank and can be computed effectively.
		
		Building on Pillay’s geometric description \cite{PILLAY200713} of imaginaries in $T_P$, we define an additive rank on imaginaries of $T_P$, called the geometric rank. It takes values in $\omega\cdot\mathbb{N}+\mathbb{Z}$ and coincides with SU-rank on real tuples. It refines SU-rank and characterizes forking in $T_P\eq$. As a consequence, we derive an explicit criterion for determining forking independence.
	\end{abstract}
	\subjclass[2020]{03C45, 03C60}
	\keywords{Algebraically closed field, Pair, Imaginaries, Independence}
	
	\maketitle
	
	\section*{Introduction}
	The theory of proper pairs of algebraically closed fields (\textup{ACF}) has been extensively studied since the 1950s. It is known to be $\omega$-stable and to admit quantifier reduction. For real tuples, Morley rank and SU-rank coincide; moreover, they admit explicit descriptions, providing a satisfactory account of dimension and independence.
	
	However, this picture does not extend to imaginaries. Although SU-rank is still defined in $T_P^{\eq}$, it no longer preserves the same level of transparency and geometric control. For example, a generic element and its coset of the additive group of the small field both have rank $\omega$, while the generic element has rank 1 over the coset.
	
	In 2007, Pillay gave a geometric description of imaginaries in proper pairs of ACF \cite{PILLAY200713}: every imaginary is interalgebraic with one of a specific geometric form. We refer to imaginaries of this type as being in \emph{Pillay form}.
	
	In this article, we show that the group appearing in the Pillay form is canonical.
	\begin{maintheorem}\label{T:Pillay-alg}
		Let $\alpha=\code{G_{\B{\alpha}}(P)*a}$, $\beta=\code{H_{\B\beta}(P)*b}$ be of Pillay form. If $\beta$ is algebraic over $\alpha$, then the stabilizer of $\tP{ab/(ab)^c}$ is a definable homogeny from $G:=G_{\B\alpha}(P)$ to $H:=H_{\B\beta}(P)$. Moreover, if $\alpha$ and $\beta$ are interalgebraic, then this stabilizer is an isogeny.
	\end{maintheorem}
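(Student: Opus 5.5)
I read the statement in the standard setup. $\B{\alpha}$ is a small tuple in $P$ over which $G$ is defined, and $G$ is a connected algebraic group defined over $\B\alpha$, acting on a variety. $\alpha$ is the code of the orbit $G(P)*a$, and $(ab)^c$ presumably denotes the parameters coding the data $\B\alpha\B\beta$ (and possibly $\acl{\alpha\beta}\cap P$). A \emph{homogeny} is a definable subgroup $S\le G\times H$ whose projection to $G$ has finite index and whose cokernel-type part $\{h:(1,h)\in S\}$ is finite. An \emph{isogeny} additionally has finite kernel and projection onto $H$ of finite index.

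\textbf{Step 1 (the stabilizer is a subgroup of $G\times H$).} The group $G(P)\times H(P)$ acts on realizations of the pair-types over $(ab)^c$. I would let $S$ be the stabilizer, inside $G\times H$, of the $L_P$-type $p=\tP{ab/(ab)^c}$, in the sense of stationary types in an $\omega$-stable theory: $S=\{g : g p = p\}$ computed via generic realizations. Since $T_P$ is $\omega$-stable, $S$ is type-definable. Because $p$ is (after enlarging $(ab)^c$ to an algebraically closed base) stationary with finitely many parameters, $S$ is in fact definable, by the standard descending chain argument for stabilizers in $\omega$-stable groups.

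\textbf{Step 2 (the projection to $G$ has finite index).} Since $\beta\in\acl{\alpha}$, I would take $g\in G(P)$ generic over $ab(ab)^c$ and note that $\alpha=\code{G(P)*ga}$. So $ga$ lies in the same $G(P)$-coset, and $\beta$ has finitely many conjugates over $\alpha$. Hence there are finitely many possibilities $H(P)*b_i$ for the coset of $b$ over $g a$, and some $h\in H(P)$ has $(ga,hb)$ realizing a nonforking extension of $p$. This is the key step. To make "same type" precise I would use Pillay's description of types in $T_P$: types over $P$-independent sets are determined by the field type together with the type of the $P$-part, so $\tP{ga\,hb/(ab)^c}=p$ reduces to a field-theoretic condition. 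It follows that $\pi_G(S)$ contains the generic elements of $G(P)$, and therefore has finite index. In fact $\pi_G(S)=G$ if connectedness is given, since $G$ is connected. For $(1,h)\in S$, the element $h$ fixes $p$ and hence $\beta$-data, and $hb$ is in $\acl{ab(ab)^c}$. I would then argue that $\{h:(1,h)\in S\}$ is a definable subgroup of $H(P)$ all of whose elements send $b$ to the finitely many realizations of $\tp{b/a(ab)^c}$ inside $H(P)*b$. It is therefore finite, provided the action of $H$ on the orbit of $b$ is free modulo a finite group, which I expect is part of the definition of Pillay form.

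\textbf{Step 3 (the interalgebraic case).} This follows from the symmetric argument with the roles of $\alpha$ and $\beta$ swapped: the stabilizer is the same subgroup, viewed inside $H\times G$. This gives a finite kernel and a projection of finite index onto $H$, so $S$ is an isogeny.

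The main obstacle is Step 2: turning the algebraicity $\beta\in\acl\alpha$, which is a statement in $T_P\eq$, into the statement that generic translates of $a$ by $G(P)$ are matched by translates of $b$ while preserving the full $L_P$-type. This needs the $P$-independence and canonical-base features of Pillay form, so that types over $(ab)^c$ are controlled by $\tp{\cdot}$ in ACF together with the $P$-part.
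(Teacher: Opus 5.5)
Your outline (stabilizer, finite-index projection, finite cokernel, symmetry for the isogeny) matches the paper's. However, both substantive claims in your Step 2 are asserted rather than proved, and the cokernel step is where the main idea is missing.

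\textbf{Cokernel.} You use that, for $(1,h)\in S$, the element $hb$ ranges over ``the finitely many realizations of $\tp{b/a(ab)^c}$''. That is, you assume $b\in\acl{a(ab)^c}$ in the sense of ACF. Nothing in your proposal gives this, and it does not follow formally from $\beta\in\aeq{\alpha}$: all the translates $hb$ lie in the single orbit coded by $\beta$, so infinitely many of them contradict nothing.

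The paper converts $T_P\eq$-algebraicity into field algebraicity as follows.
\begin{enumerate}
\item By the heir property, realize in $M$ a Morley sequence $(b_i)_{i<\omega}$ in $\tp{b/aP}$.
\item Each $ab_i$ has the same $L_P$-type as $ab$, by Fact~\ref{F: lovelypairs}(1), since $ab_i(ab)^c$ is $P$-independent. So each orbit $H*b_i$ is coded by an $\alpha$-conjugate of $\beta$.
\item There are only finitely many such conjugates. Hence there is $k$ such that every $b_i$ with $i\geq k$ lies in $\dcl{b_{k_i}P}$ for some $k_i<k$. Independence of the sequence then gives $b\in\acl{aP}$.
\item Finally, $ab\ind_{(ab)^c}P$ yields $b\in\acl{a(ab)^c}$, and freeness of the action finishes the argument.
\end{enumerate}

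This last step also shows that your reading of $(ab)^c$ is not harmless. In the paper, $(ab)^c$ denotes $\Cb{ab/P}$, not $\B{\alpha}\B{\beta}$ or $\acl{\alpha\beta}\cap P$. For example, take $\alpha=\beta=\code{a+P}$ with $a$ transcendental over $P$, and $b=a+p_0$ with $p_0\in P$ transcendental. Then $(ab)^c$ is interdefinable with $p_0$ and $S$ is the diagonal. But over $\B{\alpha}\B{\beta}=\acl{\emptyset}$, every pair $(a,b+h)$ with $h\in P$ and $p_0+h\notin\acl{\emptyset}$ has the same $L_P$-type as $(a,b)$. So with your base, the would-be cokernel is infinite.

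\textbf{Finite index.} As written, this step is a non sequitur. From a generic $g$ you obtain some $b'$ with $(ga,b')\models p$, and its orbit is coded by one of the finitely many $\alpha$-conjugates $\beta_1,\dots,\beta_n$ of $\beta$. Nothing forces that code to be $\beta$ itself, which is what you need in order to write $b'=hb$.

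You have to compare different translates instead. Suppose $g,g'\in G$ give $b',b''$ in the same $H$-orbit, say $b''=hb'$. Then $(g'g^{-1},h)$ sends the realization $(ga,b')$ of $p$ to the realization $(g'a,b'')$, so it lies in $S$. Here membership in $S$ can be tested on any realization of $p$, because $S$ is the stabilizer of $\tp{ab/P}$. Hence $g'\in\pi_1(S)g$, and so $[G:\pi_1(S)]\leq n$. The paper phrases this as a contradiction from infinitely many coset representatives.
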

	Following Theorem A, we define a new ``rank'' on $(M,P)^{\eq}$ based on the canonical data of imaginaries, called the geometric rank and denoted by $\mathrm{gR}$. We show that this rank satisfies several natural properties (listed in Theorem~\ref{T:propertieslist} in the end of the paper), including a characterization of forking in $T_P\eq$:
	\begin{maintheorem}\label{T:nfequivrk}
		Let $e_1,e_2,e_3$ be finite tuples (possibly imaginaries). Then
		$$\R{e_1/e_2e_3}\leq\R{e_1/e_2},$$
		and
		$$\R{e_1/e_2e_3}=\R{e_1/e_2}\text{ if and only if }e_1\indi{P}_{e_2}e_3.$$
	\end{maintheorem}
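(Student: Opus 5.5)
We reduce everything to Pillay-form imaginaries and their canonical data, and compute ranks via the Lascar-style additivity that gR is built to satisfy. By Pillay's theorem, every finite tuple of imaginaries is interalgebraic with one in Pillay form. Theorem~A shows the group in the Pillay form is canonical up to isogeny, so $\R{\cdot}$ depends only on the algebraic closure. We may therefore assume $e_1,e_2,e_3$ are all of Pillay form, and replace $e_2,e_3$ by $\aeq{e_2}$ and $\aeq{e_2e_3}$.

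The key intermediate statement is additivity:
$$\R{e_1e_2/e_3}=\R{e_1/e_2e_3}+\R{e_2/e_3}.$$
Note that gR takes values in $\omega\cdot\mathbb{N}+\mathbb{Z}$, so it is an ordinal-like quantity where integer parts may be negative but totals stay consistent. For $e=\code{G(P)*a}$ in Pillay form over $C$, I expect gR to be defined as
$$\omega\cdot\dim G+\SUP{ab\text{-part}}\ \text{minus a correction},$$
namely $\omega\cdot(\text{dimension of the real generic data}) + (\text{small field contribution}) - \dim G(P)$-type terms. To prove additivity, I would realise $e_1e_2$ as a single Pillay-form imaginary. I would use the product $G_1\times G_2$ acting on a pair of representatives, and then compare with the fibration $e_1e_2\to e_2$. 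Theorem~A identifies the stabilizer group of the fibre as a definable homogeny, and its kernel and image dimensions give exactly the required subtraction.

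Given additivity, monotonicity $\R{e_1/e_2e_3}\le\R{e_1/e_2}$ follows by a standard argument. Pick real representatives $a_1$ of $e_1$, chosen independent from $e_3$ over $e_1e_2$ in $T_P$. Then compute via real tuples, where gR equals SU-rank (the stated compatibility) and SU-rank is monotone. The correction terms coming from the groups can only decrease when we pass from $e_2$ to $e_2e_3$, because the stabilizer computed over a larger base is a subgroup (up to finite index) of the one over the smaller base.

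For the equivalence, first suppose $e_1\indi{P}_{e_2}e_3$. Then $\tp{e_1/e_2e_3}$ is a nonforking extension, hence has the same canonical base and the same Pillay-form data, so the ranks agree. Conversely, if the ranks are equal, I would argue contrapositively. If $e_1$ forks with $e_3$ over $e_2$, then the canonical base of $\tp{e_1/e_2e_3}$ is not in $\aeq{e_2}$. Coding that canonical base as a Pillay-form imaginary $\gamma$, which is algebraic over $e_2e_3$ and not over $e_2$, additivity gives
$$\R{e_1/e_2}-\R{e_1/e_2e_3}=\R{\gamma/e_2}-\R{\gamma/e_1e_2}>0.$$
The main obstacle is precisely this strictness. gR is finer than SU-rank, and a forking extension may keep the $\omega$-coefficient while dropping the finite part, as in the coset example from the introduction. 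So we must show that any non-algebraic $\gamma$ has strictly positive rank over a base unless it is algebraic over it. I would try to prove this directly from the definition: a Pillay-form imaginary of gR $0$ must have trivial group and a real part algebraic over the base, hence be algebraic over the base.
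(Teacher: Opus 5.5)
Your proposal has a genuine gap. The three substantive claims are asserted rather than proved: monotonicity, ``nonforking implies equal rank'', and ``forking implies a strict drop''. Meanwhile, the step you call key is trivial.

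\begin{itemize}
\item \textbf{Additivity.} With the paper's definition $\R{e/e'}:=\R{ee'}-\R{e'}$, additivity for finite tuples is a tautology and needs no appeal to Theorem~A. Also, your guessed formula is off: for $e=\code{G_b(P)*a}$ one has $\R{e}=\omega\cdot\td{a/P}+\td{b}-\dim G_b$, so the $\omega$-coefficient comes from $a$, not from $G$.
\item \textbf{Monotonicity.} Your argument goes through ``gR equals SU-rank on real tuples''. In the paper that compatibility is derived \emph{from} this theorem, as is the very meaning of gR over infinite bases such as $\aeq{e_2}$; moreover your bases $e_2,e_3$ are imaginary. The claim that the group correction can only decrease also ignores the parameter part $\td{\B{\cdot}}$, which can grow. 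In the paper's example $e_1=\code{a+P}$, $e_2=\code{a\cdot P^\times}$, $e_3=\code{ab+P}$, the relative rank $1$ of $e_1$ over $e_2$ comes from the group, while over $e_2e_3$ it comes from the parameter $b$.
\item \textbf{Nonforking implies equality.} ``A nonforking extension has the same canonical base and Pillay-form data, so the ranks agree'' is exactly what must be shown. gR is defined geometrically as $\R{e_1e_2e_3}-\R{e_2e_3}$, not from the forking behaviour of the type; this invariance is automatic for SU-rank, not for gR.
\item \textbf{Strictness.} Your step first uses that unproved direction to get $\R{e_1/e_2e_3}=\R{e_1/e_2\gamma}$. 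It then needs $\R{\gamma/e_2}>\R{\gamma/e_1e_2}$. The canonical base $\gamma$ is in general not in $\aeq{e_1e_2}$, and $\gamma$ forks with $e_1$ over $e_2$. So this is the same ``forking implies drop'' statement again, not a consequence of $\R{\gamma/e_2}>0$. Even positivity of a relative rank $\R{\gamma B}-\R{B}$ cannot simply be read off from a single Pillay form.
\end{itemize}

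The missing ingredient is the explicit geometric computation.
\begin{itemize}
\item From the torsors $X_{12}$, $X_{23}$ of Lemma~\ref{L:e12}, the paper forms the double coset $K=H_3gH_1\subset G_2$ and fixes $g_0\in K\cap\acl{\code{K}}$.
\item It builds the $G_{123}$-torsor $X_{123}$. Its homogeneous components code an imaginary interalgebraic with $e_1e_2e_3$, with parameter part $\acl{g_0b_{12}b_{23}}$ and $\dim G_{123}=\dim G_{12}+\dim G_{23}-\dim K$.
\item If $a_1\nind_{a_2P}a_3$, the $\omega$-part already drops. Assuming $a_1\ind_{a_2P}a_3$, one gets, for $g\in K$ of maximal transcendence degree over $b_{12}b_{23}$,
\[
\R{e_1/e_2}-\R{e_1/e_2e_3}=\bigl(\dim G_2-\td{g/b_{12}b_{23}}\bigr)+\bigl(\td{b_{12}/b_2}-\td{b_{12}/b_{23}}\bigr).
\]
This yields monotonicity, with equality iff $(\bigstar)$ holds.
\item A separate forking computation shows that $e_1\indi{P}_{e_2}e_3$ iff $(\bigstar)$. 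It uses Fact~\ref{F: lovelypairs}(4) and compares the Morley rank of $a_2$ over $e_2$ with that over $a_2'=g*a_2$.
\end{itemize}
Without some such computation, nothing in your outline ties the geometric quantity gR to forking.
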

	
	Moreover, we obtain a criterion \eqref{eq:conditions}, expressed in terms of canonical data from Pillay form, that determines forking among imaginaries.
	
	The paper is organized into three sections, in addition to the introduction.
	In the preliminaries, we introduce the theory $T_P$ of proper pairs of algebraically closed fields, fix notation, and recall known results on rank and independence in $T_P$.
	In the second section we introduce imaginaries of Pillay form and study their algebraicity, and we show that the group associated is canonical.
	In the final section, we define a geometric rank in terms of canonical data and show some natural properties. Indeed, we give an explicit condition, stated in \eqref{eq:conditions}, equivalent to non-forking in $T_P^{\eq}$ and to the decrease of the geometric rank. This condition reveals the underlying reasons for forking.
	
	\medskip
	\noindent
	\textbf{Acknowledgements.}
	The author would like to thank Martin Hils and Daniel Palac\'{i}n for raising the questions that led to this work, and Anand Pillay for valuable discussions.
	
	\section{Preliminaries}
	Let $L=L_{\text{ring}}$ be the language of rings, and let $T$ be a completion of the theory of algebraically closed fields (ACF) of fixed characteristic. It is well-known that $T$ is $\omega$-stable and admits quantifier elimination (QE) and elimination of imaginaries (EI). Moreover, the Morley rank and the SU-rank in $T$ coincide with transcendence degree, which is definable in families and additive.
	\subsection{Proper Pairs of ACF}
	To study the theory of proper pairs of models of $T$, we expand the language by adding a unary predicate symbol $P$ naming an elementary submodel. Denote by $L_P := L \sqcup \{P\}$ the expanded language of pairs.
	
	A \textbf{proper pair of ACF} is an $L_P$-structure $(M,P)$, such that $P(M)\subsetneq M$, where both $M$ and $P(M)$ are models of $T$. Since $T$ has QE, $P(M)$ is an elementary submodel of $M$.
	
	It is known that all such proper pairs are elementarily equivalent as $L_P$-structures. Denote the corresponding complete theory by $T_P$. In fact, $T_P$ coincides with the theory of \textit{beautiful pairs} of $T$, introduced by Poizat in \cite{pairstable} for stable theories and later generalized to \textit{lovely pairs} by Ben-Yaacov, Pillay and Vassiliev in \cite{lovelypairs} in the context of simple theories. 
	
	Every $\aleph_1$-saturated model $(M,P)\models T_P$ is a lovely pair, i.e., it satisfies the following properties for any countable parameter set $A\subset M$:
	\begin{enumerate}
		\item (\emph{Heir Property}) For any $L$-type $p \in S(A)$, there exists a non-forking extension of $p$ over $A \cup P(M)$ that is realized in $M$. 
		\item (\emph{Coheir Property}) For any $L$-type $p \in S(A)$, if $p$ does not fork over $A \cap P(M)$, then $p$ is realized in $P(M)$. 
	\end{enumerate}
	\subsection{Notation}
	We will frequently work with both languages $L$ and $L_P$. To distinguish the corresponding notions in the two languages, we use a $P$ superscript or subscript when needed. For instance:
	\begin{itemize}
		\item We denote by $\ind$ the non-forking independence in $T$, while $\indi P$ denotes non-forking independence in $T_P$.
		\item By $\acl a$ and $\aclP a$ we denote algebraic closure of $a$ in $L$ and in $L_P$, respectively.
	\end{itemize}
	
	Other model-theoretic notions (such as types, definable closure, etc.) are distinguished in the same way.
	
	For $A\subset(M,P)\models T_P$ a parameter set, we set $P(A):=A\cap P(M)$. We also write $P$ for $P(M)$. We say $A$ is \textbf{$P$-independent} if $A\ind_{P(A)}P$.
	
	For a tuple $a\in M^n$, we set $a^c:=\Cb{a/P}$. Since $T$ is $\omega$-stable and admits EI and $P$ is purely stably embedded, we can consider $a^c$ as a finite real tuple in $P$. Then $a^c\in \dclP{a}$ and the tuple $(a,a^c)$ is $\Pind$.
	\subsection{Properties for $T_P$}
	The following fact shows that, for a $P$-independent set $A$, the $L_P$-structure of $A$ is determined by its $L$-structure.
	\begin{fact}[{\cite[3.8]{lovelypairs}}, {\cite[2.5]{Imgnrinbtfprs}} and \cite{pairstable}]\label{F: lovelypairs}
		Let $(M,P)\models T_P$. Let $A,B\subset(M,P)$ be subsets that are $P$-independent. Then, we have:
		\begin{enumerate}
			\item If $\text{qftp}_{L_P}(A)=\text{qftp}_{L_P}(B)$, then $\tP A=\tP B$.
			\item The algebraic closure in the sense of $T_P$ coincides with that in $T$, i.e., $\aclP{A}=\acl{A}$.
			\item The theory $T_P$ is $\omega$-stable.
			\item If $C$ is a common $\Pind$ subset of $A$ and $B$, then:
			$$A \indi{P}_C B\quad\text{ if and only if }\quad 
			\left\{
			\begin{aligned}
				&A \ind_C B\quad\text{and},\\
				&P(A) \ind_{P(C)} P(B).
			\end{aligned}
			\right.$$
			\item $P(M)$ is purely stably embedded in $M$.
		\end{enumerate}		
	\end{fact}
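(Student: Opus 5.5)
This Fact collects known results, so the plan is to derive each item from the general theory of lovely (beautiful) pairs, specialised to $T=$ ACF. Throughout we may assume $(M,P)$ is $\aleph_1$-saturated, hence lovely. The working tool is a back-and-forth system. It consists of partial $L$-isomorphisms $f\colon A\to B$ between small $P$-independent sets that respect $P$, i.e.\ $f(P(A))=P(B)$, and in particular preserve $\text{qftp}_{L_P}$.

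\emph{Item (1).} We show this system has the back-and-forth property. Given such an $f$ and an element $a$, we first enlarge $A$ to a $P$-independent set containing $a$. Since $T$ is $\omega$-stable with EI, we can add $a^c=\Cb{a/AP}\subset P$; this gives the $P$-independent set $A':=A\,a\,\Cb{aA/P}$. We then realize the image of $A'$ over $B$. The $P$-part of $A'$ is an $L$-type over $P(B)$, and by the coheir property it is realized inside $P$ while staying independent from $P(B)\cup B$ over $P(B)$. The non-$P$ part is then chosen by the heir property to be independent from $P$ over $B$ together with the realized $P$-part. By stationarity of types over algebraically closed sets in $T$, the extended map is again an $L$-elementary partial map between $P$-independent sets respecting $P$. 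Quantifier elimination for $T$ makes $L$-types of $P$-independent sets determined by their $L$-qftp plus the $P$-predicate. Back-and-forth then yields $\tP A=\tP B$.

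\emph{Item (2).} The inclusion $\acl A\subseteq\aclP A$ is trivial. For the converse, take $b\notin\acl A$. We build infinitely many realizations of $\tP{b/A}$ via item (1). If $b\notin P$, choose $b_i$ realizing $\tp{b/A}$ with $b_i$, $\Cb{b_iA/P}$ mutually independent over $A\,P(\text{relevant})$. The heir and coheir properties allow this, since nonalgebraic types have infinitely many independent realizations. By item (1) each $b_i$ has the same $L_P$-type over $A$ as $b$. The case $b\in P$ uses the coheir property in the same way.

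\emph{Items (3) and (4).} For (3), item (1) shows that an $L_P$-type over a countable $P$-independent $A$ is determined by the $L$-type of $(a,\Cb{aA/P})$ over $A$ and by which coordinates lie in $P$. So $T_P$ has only countably many types over countable sets, i.e.\ it is $\omega$-stable. For (4), we define $A\ind^*_C B$ by the right-hand side and verify that it satisfies the characteristic properties of forking in a stable theory: invariance, symmetry, transitivity, extension (via heir/coheir as above) and local character. The stationarity needed is supplied by item (1) applied to $P$-independent sets. Kim–Pillay-style uniqueness then gives $\ind^*=\indi P$.

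\emph{Item (5).} We show that every $L_P$-definable subset of $P^n$ is $L$-definable with parameters from $P$. Take a formula over $m$; replace $m$ by the $P$-independent tuple $(m,m^c)$. By item (1), the $L_P$-type of a tuple $p\in P$ over $(m,m^c)$ is determined by $\tp{p/m^c}$, because $p\,m^c\ind_{m^c}m$ keeps the union $P$-independent. Compactness then converts this into definability over $m^c$.

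The main obstacle is the extension step in item (1): realizing the enlarged $P$-independent set over $B$ while preserving $P$-independence, using heir and coheir simultaneously. Every other item reduces to it.
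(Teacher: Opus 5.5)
There is a genuine gap in item (4), and as literally stated (4) is false, so the step cannot be repaired without changing the statement. (The paper gives no proof of this Fact beyond citing the lovely-pairs literature; your items (1), (2), (5) follow the standard route it points to.) For (4) you define $\ind^*$ by the right-hand side and take stationarity ``from item (1)''. But item (1) can only be applied to $A\cup B$ if $A\cup B$ is itself $P$-independent. The conditions $A\ind_C B$ and $P(A)\ind_{P(C)}P(B)$ do not ensure this.

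Here is a counterexample. Take $C=\emptyset$, $A=\{a\}$ and $B=\{a+p\}$, with $a\notin P$ and $p\in P$ transcendental. All three sets are $P$-independent with empty $P$-part. Since $\td{a,a+p}=2$ we have $a\ind a+p$, so the right-hand side holds. However, $\tP{a/a+p}$ contains $P(x-(a+p))$, and this formula divides over $\emptyset$. Indeed, elements $c_0,c_1,\dots$ algebraically independent over $P$ form an $L_P$-indiscernible sequence in $\tP{a+p}$ by item (1), and the cosets $c_i+P$ are pairwise disjoint. Hence $a\nindi{P}_{\emptyset}a+p$. The failure is exactly the missing $P$-independence of the union: $\td{a,a+p/P}=1$.

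The correct condition replaces $A\ind_C B$ by $A\ind_{CP}B$. This is also the form the paper actually uses later: condition (a) of $(\bigstar)$ is $a_1\ind_{a_2P}a_3$, and it is fed into this Fact. With the corrected condition your route goes through, as follows.
\begin{itemize}
\item From $A\ind_{CP}B$, $A\ind_{CP(A)}P$ and $B\ind_{P(B)}P$, transitivity gives $AB\ind_{P(A)P(B)}P$. So $A\cup B$ is $P$-independent with $P$-part $P(A)\cup P(B)$.
\item Adding $P(A)\ind_{P(C)}P(B)$ and $C\ind_{P(C)}P$, one recovers $A\ind_C B$.
\item Now suppose $A_1\equiv^P_C A_2$ both satisfy the condition against $B$, with $C$ algebraically closed. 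Then $L$-stationarity gives $A_1\equiv_B A_2$, and the $P$-predicates on $A_1B$ and $A_2B$ match. Item (1) then yields $A_1\equiv^P_B A_2$.
\end{itemize}
That is the stationarity your Kim--Pillay argument needs. You still have to extend $\ind^*$ to arbitrary small sets via $P$-independent closures and check finite character.

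Two smaller repairs elsewhere.
\begin{itemize}
\item Run the back-and-forth in (1) with algebraically closed $P$-independent sets. Stationarity of $\tp{d/P(A)}$ needs $P(A)$ algebraically closed. This costs nothing: $\acl{A}\cap P=\acl{P(A)}$ for $P$-independent $A$, and any $L$-elementary extension of $f$ to $\acl{A}$ automatically respects $P$.
\item In (3), counting $L$-types of the infinite tuple $\Cb{aA/P}$ does not give countability. Instead choose a finite $c\subseteq P$ with $a\ind_{Ac}P$, which exists by superstability of $T$. Then $Aac$ is $P$-independent, and $\tP{a/A}$ is determined by $\tp{ac/A}$ together with $P$-membership. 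There are only countably many such data.
\end{itemize}
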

	The last statement follows from quantifier reduction in $T_P$ and the fact that $T$ is stable.
	\subsection{Ranks of Real Tuples}
	Poizat \cite{Poizat2001LgalitAC} described the ranks of real tuples in models of $T_P$ (see, e.g., \cite{martinpizarro2024noetheriantheories} for a proof).
	\begin{fact} \label{F:Amador}
		Let $C\preceq M\models T_P$ and $a$ be a real tuple from $M$. Then 
		\begin{equation*}
			\MRP {a/C}=\SUP {a/C}=\omega\cdot\td{a/CP}+\td{(aC)^c/C}.
		\end{equation*}
	\end{fact}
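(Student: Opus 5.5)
The plan is to prove the two equalities separately. First, Morley rank equals SU-rank for real tuples. Second, both equal the explicit formula. Throughout I will work over a $\Pind$ base.

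\textbf{Reduction.} Since $C\preceq M$ is a model of $T_P$, $C$ is $\Pind$, i.e.\ $C\ind_{P(C)}P$. Replacing $a$ by $(a,(aC)^c)$ changes nothing: $(aC)^c\in\dclP{aC}$, so neither rank changes, and neither do $\td{a/CP}$ or the $c$-part. Hence we may assume $aC$ is $\Pind$. Then split $a$ as $a_0=(aC)^c$, which lies in $P$, and $a$ over $Ca_0$. By additivity of SU-rank (Lascar inequalities, exact here for finite ranks and in Cantor normal form), it suffices to show two things:
\begin{itemize}
\item $\SUP{a_0/C}=\td{a_0/C}$, for $a_0\in P$;
\item $\SUP{a/Ca_0}=\omega\cdot\td{a/CP}$ when $aC$ is $P$-independent over $a_0$.
\end{itemize}
These combine via the sum $\omega\cdot m + n$, where the ordering of the terms must be handled carefully.

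\textbf{The $P$ part.} Since $P$ is purely stably embedded (Fact~\ref{F: lovelypairs}(5)) and the induced structure on $P$ is just ACF, forking for tuples inside $P$ over $\Pind$ sets is governed by Fact~\ref{F: lovelypairs}(4). This gives SU-rank equal to transcendence degree over $P(C)$, which equals $\td{a_0/C}$ since $C\ind_{P(C)}P$.

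\textbf{The transcendental part.} I will argue by induction on $m=\td{a/CP}$, showing $\SUP{a/D}\ge\omega\cdot m$ and $\le\omega\cdot m$ for $\Pind$ $D$ with the $c$-data absorbed. For a single element $b$ transcendental over $CP$:
\begin{itemize}
\item \emph{Lower bound.} Forking extensions of the form $b\in$ an $L$-generic curve defined over $C$ together with $n$ new points of $P$ have rank $\ge n$. These arise from the heir/coheir properties: choose $p\in P$ with $\td{p/C}=n$ independent, and let $b'$ satisfy $\td{b'/Cp}=1$ with $(b'Cp)^c$ of transcendence degree $n$ over $Cp$. So $\SUP{b/C}\ge\omega$.
\item \emph{Upper bound.} Any forking extension either makes $b$ algebraic over $CP$, leaving finite rank $\td{(bC)^c/C}$ by the $P$-part, or preserves transcendence and then, by Fact~\ref{F: lovelypairs}(4), must fork on the $P$-side, which is finite-rank.
\end{itemize}

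For Morley rank, $\omega$-stability gives $\SUP{}\le\MRP{}$. For the reverse inequality I would use quantifier reduction: $\tP{a/C}$ is isolated, modulo $\Pind$ data, by the $L$-type of $(a,a^c)$ over $C$ together with $a^c\in P$. This yields a definable family computation of $\MRP{}$ by the same induction, with the ``$\omega$'' steps witnessed by definable families of fibres indexed by $P$-points.

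\textbf{Main obstacle.} The main obstacle is the upper bound in the inductive step. I must show that every forking extension of a type with $\td{a/CP}=m$ either keeps $\td{\cdot/CP}=m$ and drops only finite rank, or lowers it. Making this precise requires Fact~\ref{F: lovelypairs}(4) applied after closing parameters to be $\Pind$, and a careful check that closing under $(\cdot)^c$ does not itself cause forking.
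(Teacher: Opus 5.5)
The paper does not prove this Fact; it quotes it from the literature. So your outline has to stand on its own, and as written it has genuine gaps. The most serious one is on the Morley-rank side.

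\textbf{The Morley-rank upper bound is missing.} Since $\SUP{\cdot}\le\MRP{\cdot}$ is automatic, the whole content of the first equality is the upper bound $\MRP{a/C}\le\omega\cdot m+n$, where $m=\td{a/CP}$ and $n=\td{(aC)^c/C}$. Your sketch does not supply it:
\begin{itemize}
\item $\tP{a/C}$ is not isolated, since a non-algebraic type over a model never is.
\item That it is \emph{determined} by $\tp{a\,(aC)^c/C}$ together with $P$-independence gives no bound on Morley rank.
\item ``$\omega$-steps witnessed by definable families of fibres'' is a lower-bound device, which you do not need.
\end{itemize}
What is missing is a Cantor--Bendixson argument. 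First pass to a nonforking extension over an $\aleph_0$-saturated model. This preserves $\MRP{\cdot}$ and, by the SU part, the value $f(q):=\omega\cdot\td{a'/CP}+\td{(a'C)^c/C}$ for $a'\models q$. It then suffices to find $\varphi\in p:=\tP{a/C}$ such that every $q\neq p$ in $[\varphi]$ has $f(q)<f(p)$. Induction on $f$ then gives $\mathrm{CB}(p)\le f(p)$, and over $\aleph_0$-saturated models Cantor--Bendixson rank is Morley rank.

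Such a $\varphi$ comes from uniform definability. Let $a_0=(aC)^c$ and $W=\mathrm{Loc}(a_0/C)$. Take a $C$-definable family $(V_y)_{y\in W}$ with $V_{a_0}=\mathrm{Loc}(a/Ca_0)$, which has dimension $m$, and shrink $W$ around $a_0$ so that all fibres have dimension at most $m$. Put $\varphi(x):=\exists y\,(P(y)\wedge y\in W\wedge x\in V_y)$. Every $a'\models\varphi$ (via some $y'$) has $\td{a'/CP}\le m$. If equality holds, then $a'\ind_{Cy'}P$, so $\td{(a'C)^c/C}\le\td{y'/C}\le n$. Equality there forces $y'$ to be generic in $W$, and then $q=p$ by Fact~\ref{F: lovelypairs}(1).

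\textbf{The SU part.} The architecture is sound: absorb $(aC)^c$, apply Lascar's inequalities, and compute the finite part inside $P$. But both bounds on the transcendental part are off.

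Your lower-bound witness cannot exist. Any $b'\equiv^P_C b$ with $b$ transcendental over $CP$ satisfies $b'\ind_C P$, so $(b'Cp)^c$ has transcendence degree $0$ over $Cp$. In fact $\tP{b/C}$ has a unique extension to any set contained in $C\cup P$, so new points of $P$ alone never produce forking. The witnesses need new parameters outside $P$. For example, take $d_0,\dots,d_n$ algebraically independent over $CP$, $p_1,\dots,p_n\in P$ generic over $Cd$, and $b'=d_0+\sum_i d_ip_i$. Then $b'$ realizes $\tP{b/C}$ and lies in $\acl{CdP}$. It is interdefinable with $(p_1,\dots,p_n)$ over $Cd$, because the $d_i$ are linearly independent over $P$. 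Hence $\SUP{b'/Cd}=n$ for every $n$, giving $\SUP{b/C}\ge\omega$.

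For the upper bound, your ``main obstacle'' is misdiagnosed. Once $(aC)^c$ is absorbed into the base, no forking extension to a model $D\supseteq C$ can keep $\td{a/DP}=m$. Indeed, equality gives $a\ind_C DP$, so $aD$ is $P$-independent with $\acl{aD}\cap P=\acl{D}\cap P$, and therefore $a\indi{P}_C D$. So every forking extension strictly lowers the transcendence degree over $P$. Induction together with the upper Lascar inequality then bounds its rank by $\omega\cdot m'+k<\omega\cdot m$, with $m'<m$ and $k$ finite. In particular, the second branch of your one-variable argument (transcendence kept, forking only on the $P$-side, finite rank) never occurs; if it did, its rank would be at least $\omega$.

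\textbf{Which form of Fact~\ref{F: lovelypairs}(4) to use.} You need the version with $A\ind_{CP}B$; the printed $A\ind_C B$ cannot be right. For a counterexample, take $d\in D\setminus\acl{CP}$ and $p\in P$ generic over $D$, and let $b=d+p$. Then $bC\ind_C D$ and $P(bC)=P(C)$. Yet $\tP{b/D}$ contains $x-d\in P$, which divides over $C$: the coset $d+P$ has infinitely many $C$-conjugates, since $C$ is a model and $d\notin C+P$. The $\ind_{CP}$ form is also the one the paper effectively uses later through condition (a) of \eqref{eq:conditions}.
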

	
	In particular, Morley rank and SU-rank coincide for types of real tuples. From the above formula, one can show these ranks are additive in the sense of Cantor addition, see \cite{Poizat2001LgalitAC}.

	\section{Algebraicity between Imaginaries of Pillay Form}
	In this section, we will introduce a class of imaginaries in $T_P$ of a specific form, which we call ``Pillay form'', referring to Pillay's theorem on a geometric description of imaginaries in $T_P$. Then, we will study algebraicity between imaginaries of Pillay form.
	\begin{defi}
		Let $G$ be a connected algebraic group and $V$ be an absolutely irreducible variety, both defined over some algebraically closed field $k$. A rational function $\mu:G\times V\dashrightarrow V$ defined over $k$ is called a \textbf{generically free rational group action} if for any $K\supset k$, any $v\in V(K)$ generic over $k$, and any $g_1,g_2\in G(k)$, we have $(g_1,v),(g_2,\mu(g_1,v))\in \text{dom} (\mu)$, $\mu(g_1,\mu(g_2,v))=\mu(g_1*g_2,v)$ and $\mu(g_1,v)=v$ if and only if $g_1=\text{id}$.
	\end{defi}
	\begin{defi}
		Let $(M,P)\models T_P$. Let $B\subset\A^n, V\subset \A^m$ be irreducible varieties over the prime field together with a surjective morphism $\pi:V\rightarrow B$ such that for any $b\in B$ generic over the prime field, $V_b:=\pi^{-1}(b)$ is an absolutely irreducible variety defined canonically over $b$. Let $G\rightarrow B$ be an algebraic group over $B$, i.e., every fiber $G_b$ is an algebraic group and the group operations are defined uniformly over $b$.
		
		Suppose moreover there exists a $\emptyset$-definable subset $\mu\subset G\times_B V\times_B V$ such that for every $b\in B(P)$, the fiber $\mu \cap G_b\times V_b\times V_b$ is the graph of a partial map $\mu_b: G_b\times V_b\dashrightarrow V_b$, and that for generic $b\in B(P)$, $\mu_b$ is a generically free rational group action.
		
		Let $\phi(x,y)$ be the $L_P$-formula expressing the following: $y\in B(P),x\in V_y$ and for any $g_1,g_2\in G_y(P)$,
		$$(g_1,x),(g_2,\mu_y(g_1,x)) \in \text{dom} (\mu_y),$$  $$\mu_y(g_1,\mu_y(g_2,x))=\mu_y(g_1*g_2,x)\in V_y,$$
		$$\mu_y(g_1,x)=x\text{ if and only if }g_1=\text{id}_{G_y}.$$
		Set
		$$W:=\{x\in V\mid M\models\phi(x,\pi(x))\}.$$
		
		We define a binary relation $\sim$ on $W$ by setting $w_1 \sim w_2$ if $\pi(w_1)=\pi(w_2)$ and there exists $g\in G_{\pi(w_1)}(P)$ such that $\mu (g,w_1)=w_2$. One can check that $\sim$ defines an equivalence relation on $W$. We denote the quotient sort by
		$$\mathcal{S}(B,V,G,\mu):=W/\sim.$$
		
		An imaginary $e\in(M,P)\eq$ is said to be of \textbf{Pillay form} if it is a generic element in some Pillay sort $\mathcal{S}(B,V,G,\mu)$. That is, there exists a generic $b$ in $B(P)$ and $a\in V_b(M)$ a generic over $P$, such that $e=\lceil G_b(P)*a\rceil$\footnote{Let $X$ be a definable set. We denote by $\code{X}$ the canonical parameter of $X$.}.
	\end{defi}
	\begin{rmk}
		In general, let $T$ be a stable theory with EI and nfcp. Let $T_P$ be the theory of beautiful pairs of models of $T$. Likewise, we can define Pillay sorts: Let $G$ be a definable group acting definably on some definable set $V$ in the sense of $L$. Assume that the induced group action of $G(P)$ on $V$ is generically free, and that all the elements in an $L_P$-generic orbit have the same $L_P$-type over $P$. Then the canonical parameter of such an orbit is said to be of Pillay form.
	\end{rmk}
	\begin{fact}[\cite{PILLAY200713}]\label{F:Pillay-gei}
		Let $(M,P)\models T_P$. For any imaginary $e\in(M,P)\eq$, there exists an imaginary $e'$ of Pillay form such that $e$ and $e'$ are interalgebraic.
	\end{fact}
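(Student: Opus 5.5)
The strategy is to reduce to a real tuple over the small field, interpret the equivalence relation defining $e$ inside $P$, and extract a group from it by a group-chunk argument.

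Let $e=\code{X}$ with $e\in\deq{a}$ for some real tuple $a$. Replacing $a$ by $(a,a^c)$, we may assume $a$ is $P$-independent and that $b_0:=a^c\in P$ is a finite tuple. Then $\tp{a/P}$ is the generic type of an absolutely irreducible variety $V_{b_0}$ defined canonically over $b_0$. After adding to $b_0$ finitely many parameters from $P$ that are algebraic over $e$, we may also assume $b_0\in\aeq{e}$. This requires a preliminary lemma: $\aeq{e}\cap P$ is finitely generated over $\emptyset$, and $e$ is $P$-independent in the appropriate sense. I would prove this lemma from $\omega$-stability of $T_P$ (Fact~\ref{F: lovelypairs}(3)) together with the coheir property.

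Next, consider the $L_P$-definable equivalence relation $E$ with $e=a/E$. By Fact~\ref{F: lovelypairs}(1), $\tP{a'/P}$ is determined by $\tp{a'/P}$ when $a'$ is $P$-independent. Pure stable embeddedness (Fact~\ref{F: lovelypairs}(5)) then shows the following: restricted to realizations of the $L$-generic type of $V_{b_0}$ over $P$, $E$ is given by an $L$-formula $\psi(x,x',p)$ with parameters $p$ ranging over $P$. So on generic points, $E$-classes are unions of $L$-definable fibres parametrized by $P$. For $a'\,E\,a$ with $a'$ generic, the key observation is that $a'\in\acl{aP}$ up to finitely many transcendence parameters. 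Comparing $\td{a/P}$ with $\td{a/Pe}$ via Fact~\ref{F:Amador}, this forces $a$ and $a'$ to be interalgebraic over $P$, hence generically related by a $P$-definable birational correspondence.

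The main step is then a Weil/Hrushovski group-chunk argument. Consider the collection of germs of $P$-definable birational maps $\sigma:V_{b_0}\dashrightarrow V_{b_0}$ sending the generic of a class to another point of the same class. This collection is closed under composition and inverse on generics and is definable in families over $P$. By $\omega$-stability, a type-definable group of germs arises, and its connected component is definable. Applying Weil's theorem inside the algebraically closed field $P$, this group is $G_b(P)$ for a connected algebraic group $G_b$ over some $b\in P$. It acts on $V_b$ by a generically free rational action $\mu_b$; the action is free on generics because a germ fixing a generic point is the identity. After enlarging $b_0$ to $b$, the orbit $G_b(P)*a$ is contained in the $E$-class of $a$. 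Conversely, the $E$-class meets only finitely many orbits, since the index of the connected component is finite. Hence $e':=\code{G_b(P)*a}$ is interalgebraic with $e$.

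Finally, I would uniformize in $b$ to obtain $B$, $V$, $G$, $\mu$ over the prime field, and check that $b$ is generic in $B(P)$ and $a$ is generic in $V_b$ over $P$. The step I expect to be hardest is the group-chunk step. Specifically, I must show that the germs form a group rather than merely a groupoid, and that this group is definable in $P$ rather than merely over $P$. I would first use the stationarity of $\tp{a/P}$ and the canonicity of the germ group to get the group, and then Fact~\ref{F: lovelypairs}(5) to push its definition into $P$.
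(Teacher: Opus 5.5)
The paper does not prove this fact; it is quoted from Pillay. So your argument has to stand on its own, and it breaks at its central step. You assert that $E$-equivalent generic points $a,a'$ are interalgebraic over $P$, by ``comparing $\td{a/P}$ with $\td{a/Pe}$ via Fact~\ref{F:Amador}''. That fact only computes ranks of real tuples over models, and $\td{a/Pe}$ has no meaning for an imaginary $e$. Worse, the assertion is false for an arbitrary representative. Take $E$ to be the trivial equivalence relation on $M$, so $e$ is $\emptyset$-definable, and take $a\notin P$. All your normalisations hold, with $V_{b_0}=\A^1$. But every $a'$ is $E$-equivalent to $a$, and in general $a'\notin\acl{aP}$. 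Your group of germs is then all of $\mathrm{Bir}(\A^1)(P)=\mathrm{PGL}_2(P)$. This group does act freely on generics, yet the class meets infinitely many of its orbits, for instance those of $a,a^2,a^3,\dots$, since $[P(a):P(a^n)]=n$. Hence $\code{\mathrm{PGL}_2(P)*a}$ has infinitely many conjugates over $e$ and is not algebraic over it. On $\A^2$ the germ group would be the Cremona group, which is not algebraic at all. For the same reason, your claim that the class meets only finitely many orbits because the connected component has finite index is a non sequitur: finite index says nothing about how many orbits a single class contains.

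What is missing is a preliminary reduction that chooses the representative correctly. You must replace $a$ by a real tuple $c$ such that $e$ is interalgebraic with an imaginary in $\deq{c}$ and $\tP{c/\aeq{e}}$ is (almost) $P$-internal, i.e.\ its realizations lie in $\acl{F\cup P}$ for a fixed small $F$. Only with such a $c$ does your claim about class members hold. Only then can your germ group be shown to be essentially the binding group of a $P$-internal type, hence of the form $G_b(P)$, with the class covered by finitely many orbits. You would still have to deal with correspondences that are merely finite-to-finite rather than birational. Producing such a $c$ is where the real content of Pillay's theorem lies. One possible route, which you would still need to carry out, goes as follows. By stability, $e$ is interalgebraic with the canonical base of $\tP{a/N}$ for a small model $N$ with $a\indi{P}_e N$. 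Then pass to the $L$-canonical base $\gamma$ of $\tp{a/\acl{NP}}$, all of whose $N$-conjugates lie in $\acl{NP}$, and check that $e$ is still recovered from $\gamma$ up to interalgebraicity. Separately, in your first step, adjoining parameters to $b_0$ cannot place $a^c$ in $\aeq{e}$. Instead, choose $a$ with $a\indi{P}_{\aeq{e}}P$; then $a^c\in\aeq{e}$ holds automatically.
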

	\begin{nota}
		Given $e\in(M,P)\eq$, denote $\B e:=\aeq{e}\cap P(M)$.
	\end{nota}
	\begin{rmk}       
		In the original statement of Fact~\ref{F:Pillay-gei}, it is not required that $b$ be the canonical parameter of $V_b$. 
		However, it follows from the proof in \cite{PILLAY200713} that $b$ may in fact be chosen canonically. 
		In this case, one obtains $\acl{b}=\B{e}$.
		
		Moreover, G.~J.~Boxall shows in his PhD thesis \cite{BoxallThesis} that $(M,P)$ admits weak elimination of imaginaries with imaginaries of Pillay form. 
		More precisely, for every imaginary $e\in(M,P)\eq$, there exists an imaginary $e'$ of Pillay form such that 
		$e\in\deq{e'}$ and $e'\in\aeq{e}$.
	\end{rmk}	
	If two imaginaries of Pillay form are interalgebraic, then there are some connections between the groups associated to the imaginaries. In fact, we have the following result.
	\begin{thm}[Theorem~\ref{T:Pillay-alg}]
		Let $\alpha=\code{G_{\B{\alpha}}(P)*a}$, $\beta=\code{H_{\B\beta}(P)*b}$ be of Pillay form. If $\beta$ is algebraic over $\alpha$, then the stabilizer of $\tP{ab/(ab)^c}$ is a definable homogeny from $G:=G_{\B\alpha}(P)$ to $H:=H_{\B\beta}(P)$. Moreover, if $\alpha$ and $\beta$ are interalgebraic, then this stabilizer is an isogeny.
	\end{thm}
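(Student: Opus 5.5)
First I will make the objects concrete. Fix generic representatives $a$ of $\alpha$ and $b$ of $\beta$ with $b$ chosen so that $ab$ is realised suitably; by Fact~\ref{F:Pillay-gei} and the preceding remark, $\B\alpha=\acl{b_\alpha}$ and $\B\beta=\acl{b_\beta}\subseteq \B\alpha$ because $\beta\in\aeq{\alpha}$. Set $q:=\tP{ab/(ab)^c}$. Since $(ab,(ab)^c)$ is $P$-independent, Fact~\ref{F: lovelypairs}(1) says $q$ is determined by the $L$-type of $ab$ over $(ab)^c$. I will define the stabilizer as
$$S:=\{(g,h)\in G\times H : (g*a,h*b)\models q\},$$
or more precisely the set of pairs $(g,h)$ with $\tP{(g*a)(h*b)/(ab)^c}=q$. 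It is definable because $q$ is isolated by an $L$-formula over $(ab)^c$ and $P$ is stably embedded (Fact~\ref{F: lovelypairs}(5)). Using generic freeness of the actions and the fact that $\tP{\cdot/P}$ is constant on generic orbits, I will check that $S$ is a subgroup of $G\times H$. This amounts to composing translates and using that $g*a$ again realises the generic type over $P$.

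Next I will show that the projection $S\to G$ is surjective with finite kernel-side fibre behaviour, which is what "homogeny" should mean: a subgroup of $G\times H$ whose projection to $G$ is onto and whose fibre over $\mathrm{id}_G$ is finite. For surjectivity, take $g\in G$. Then $g*a$ is in the same orbit, so it has the same code $\alpha$. Since $\alpha\equiv^P_{(ab)^c}\alpha$ via an automorphism over $P$ sending $a\mapsto g*a$ (both are generic over $P$ with the same type over $P$), this automorphism sends $b$ to some $b'$ with $\code{H*b'}$ among the finitely many conjugates of $\beta$ over $\alpha$. After passing to a finite-index subgroup, or after replacing $\beta$ by its finite orbit, I get $b'=h*b$ with $(g,h)\in S$. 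For finiteness of the fibre over $\mathrm{id}$: if $(\mathrm{id},h)\in S$, then $h*b$ is determined up to finitely many choices by $a$ and $(ab)^c$ in the $L$-sense, so $\td{}$-counting over $P$ forces $h*b\in\acl{aP}$. By generic freeness, only finitely many $h$ work.

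For the "moreover" part I will apply the same argument symmetrically with $\alpha$ algebraic over $\beta$. This gives that the projection to $H$ is onto and the fibre over $\mathrm{id}_H$ is finite, which makes $S$ an isogeny.

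The main obstacle is surjectivity: producing $h$ with $(g*a,h*b)\models q$ exactly, not just up to a conjugate of $\beta$. The finitely many conjugates of $\beta$ over $\alpha$ must be handled, probably by showing that the conjugates reachable from the identity component form a connected definable family. Since $G$ is connected (a connected algebraic group over $P$), the map $g\mapsto$ (conjugate) is then constant, and the conjugate must be $\beta$ itself.
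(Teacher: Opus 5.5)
Your overall plan matches the paper's: the same $S$, show $\pi_1(S)$ has finite index and $\mathrm{coker}(S)$ is finite, then use symmetry for the isogeny. However, neither of the two substantive steps is actually proved.

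\textbf{The projection to $G$.} You correctly see that an automorphism over $(ab)^c$ sending $a$ to $g*a$ only carries $\beta$ to \emph{some} conjugate of $\beta$ over $\alpha$. Your proposed remedy does not work. Connectedness of $G$ in the model-theoretic sense (no proper definable subgroup of finite index) does not force a definable map from $G$ to a finite set to be constant; the indicator function of $\{1\}$ is a counterexample. Moreover, your map $g\mapsto$ (conjugate) is not even well defined.

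The missing step is a link to the group structure. If $(g_i*a)b_i\equiv^P_{(ab)^c}(g_j*a)b_j\equiv^P_{(ab)^c}ab$ and $b_i=h*b_j$ for some $h\in H$, then $(g_ig_j^{-1},h)\in S$. Consequently, elements of $G$ taken from pairwise distinct cosets of $\pi_1(S)$ yield pairwise distinct orbits $\beta_i=\code{H*b_i}$, all satisfying $\alpha\beta_i\equiv^P\alpha\beta$. Algebraicity of $\beta$ over $\alpha$ then bounds the index of $\pi_1(S)$. This is the paper's argument, and your phrase ``after passing to a finite-index subgroup'' presupposes it.

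\textbf{The cokernel.} Your assertion that $h*b$ is ``determined up to finitely many choices by $a$ and $(ab)^c$'' is exactly the statement $b\in\acl{a(ab)^c}$. That is the heart of the matter, and you do not derive it. The hypothesis gives only $L_P$-algebraicity of an orbit; converting this into $L$-algebraicity of the point $b$ needs an argument. The paper uses the heir property to realize in $M$ a Morley sequence $(b_i)$ of $\tp{b/aP}$. All the orbits $\code{H*b_i}$ have the same $L_P$-type as $\beta$ over $\alpha$, so only finitely many of them occur. Hence $b_i=h*b_j$ for some $i\neq j$, so $b_i\in\dcl{b_jaP}$ even though $b_i\ind_{aP}b_j$, which makes $\tp{b/aP}$ algebraic.

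Your ``transcendence-degree counting over $P$'' does not supply this. In any case, $b\in\acl{aP}$ alone gives no finiteness, since $P$ is infinite. You still need $ab\ind_{(ab)^c}P$ to descend to $b\in\acl{a(ab)^c}$. Only then does freeness of the $H$-action bound the set of $h$ with $(1,h)\in S$.

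\textbf{Two smaller points.} First, $q$ is not isolated by an $L$-formula over $(ab)^c\subseteq P$, since such a formula has solutions in $P$. Definability of $S$ should instead come from $S$ being a type-definable subgroup in the $\omega$-stable theory $T_P$. Second, you should use automorphisms over the small set $(ab)^c$, not over $P$.
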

	
	In stable theories, homogenies and isogenies are defined as follows:
	\begin{defi}
		Let $G$, $H$ be two (type-)definable groups. 
		\begin{itemize}
			\item A \textbf{homogeny} from $G$ to $H$ is a (type-)definable subgroup $S\subset G\times H$, such that the projection of $S$ to the first coordinate $\pi_1(S)$ is a subgroup of bounded index in $G$ and the cokernel coker$(S)=\{h\in H|\,(1,h)\in S\}$ is finite.
			
			If moreover $\pi_2(S)$ is a subgroup of bounded index in $H$ and the subgroup ker$(S)=\{g\in G|\,(g,1)\in S\}$ is finite as well, $S$ is called an \textbf{isogeny}.
			
			\item If there is an isogeny between $G$ and $H$, then we say $G$ and $H$ are \textbf{isogenous}.
		\end{itemize}
	\end{defi}
	In our context, since $T_P$ is $\omega$-stable, we can replace bounded index by finite index in the definition, and every type-definable group is definable.
	\begin{rmk}
		Let $G$, $H$ be two groups. Let $S$ be a subgroup of $G\times H$. Then $S$ induces an isomorphism: $$\frac{\pi_1(S)}{\text{ker}(S)}\overset{\sim}{\longrightarrow}\frac{\pi_2(S)}{\text{coker}(S)}.$$ So if $G$ and $H$ are isogenous, they have the same SU-rank.
	\end{rmk}
	\begin{proof}[Proof of Theorem \ref{T:Pillay-alg}]
		We may assume that $(M,P)$ is a lovely pair. Suppose $\alpha=\code{G_{\B\alpha}(P)*a}$ and $\beta=\code{H_{\B\beta}(P)*b}\in (M,P)\eq$ both of Pillay form such that $\beta$ is algebraic over $\alpha$.
		
		Let $p=\tP{ab/(ab)^c}$. For convenience, let $X$ and $Y$ denote the orbits $G_{\B\alpha}(P)*a$ and $H_{\B\beta}(P)*b$, respectively. Let $G$ and $H$ denote the groups $G_{\B\alpha}(P)$ and $H_{\B\beta}(P)$, respectively. The group $G\times H$ acts on $X\times Y$ naturally, and we define $S$ as the stabilizer of $p$, i.e., $$S:=\{(g_1,g_2)\in G\times H\mid(g_1*a,g_2*b)\equiv^P_{(ab)^c} ab\}.$$ 
		Since $T_P$ is $\omega$-stable, $S$ is an $(ab)^c$-definable subgroup of $G\times H$. We will show that $S$ is a homogeny.
		
		First, we show that the projection of $S$ to $G$, denoted by $\pi_1(S)$, has finite index in $G$. (In fact, we will see, since $G$ is connected, that $\pi_1(S)=G$). Suppose for contradiction that $\pi_1(S)$ has infinite index in $G$. Then there exists $\{g_i\}_{i\in\omega}\subset G$ with $g_0=1$ such that $\pi_1(S)g_i\cap \pi_1(S)g_j=\emptyset$ for $i\neq j$. Since $G_{\B{\alpha}}$ is connected in the sense of $T$, $G$ is connected in the sense of $T_P$ as $G\subset P$ and $P$ is purely stably embedded in $M$. Hence $g_i\in G\subset\text{Stab}(\tP{a/\B e})$. Denote $a_i:=g_i*a\in X$. From the definition of Pillay form, we know $a_i\equiv_{P(M)}a$ and hence $a_i\equiv_{(ab)^c}a$. Then there exists $\{b_i\}_i$ such that $a_ib_i\equiv_{(ab)^c} ab$ and $b_0=b$.
		
		Notice that $a_i=(g_ig_j^{-1})*a_j$. If for some $i\neq j$ there exists $h\in H$ such that $b_i=h*b_j$, then $(g_ig_j^{-1},h)*(a_j,b_j)=(a_i,b_i)$. As $a_ib_i\equiv_{(ab)^c}a_jb_j$ we know that $(g_ig_j^{-1},h)\in S$, contradicting $g_ig_j^{-1}\notin \pi_1(S)$. Therefore, $b_i$ and $b_j$ are in different orbits under the group action of $H$ for $i\neq j$. That is, if we denote by $\beta_i$ the canonical parameter of the orbit of $b_i$, we have $\beta_i\neq\beta_j$ for $i\neq j$. However, $a_ib_i\equiv^Pa_jb_j$ since $a_ib_i\equiv_{(ab)^c}a_jb_j$. So after taking quotients, we still have $\alpha \beta_i\equiv^P\alpha\beta_j$, i.e., all the orbits $\{\beta_i\}_{i\in\omega}$ have the same type over $\alpha$. This contradicts that $\beta=\beta_0$ is algebraic over $\alpha$. Therefore, $\pi_1(S)$ has finite index in $G$.
		
		On the other hand, to show coker$(S)$ is finite, we claim that $\tp{b/aP}$ is algebraic. Using the heir property of lovely pairs, we may take a Morley sequence $\{b_i\}_{i\in\omega}$ in $\tp{b/aP}$ in $M$, where $b_0=b$.
		Since $\beta$ is $L_P$-algebraic over $\alpha$, there are at most finitely many orbits in Loc$(b/P)$ having the same $L_P$-type as $\beta$ over $\alpha$. Hence, there exists an integer $k$ such that for any $i\in\omega$, there exists $0\leq k_i<k$ such that $b_i$ and $b_{k_i}$ are in the same orbit.
		
		Note that $\{b_{i+k}\}_{i\in\omega}$ is a Morley sequence in $\tp{b_k/b_0\dots b_{k-1}aP}$. Moreover, for every $i\geq k$, $b_i\in\dcl{b_0\dots b_{k-1}aP}$ as there exists $h\in H\subset P$ such that $b_i=h*b_{k_i}$. Hence the type $\tp{b_i/b_0\dots b_{k-1}aP}$ is algebraic, thus so is $\tp{b/aP}$.
		
		Now, since $ab\ind_{(ab)^c}P$, $b\ind_{a(ab)^c}P$. It follows that $b$ is algebraic over $a(ab)^c$, and there are only finitely many $b'$ such that $ab\equiv_{(ab)^c} ab'$. As $H$ acts freely on $Y$, there are thus only finitely many $h\in H$ satisfying that $a,b\equiv_{(ab)^c}a,h*b$. That is to say, coker$(S)$ is finite.
		
		In conclusion, $S=\text{Stab}(\tp{ab/(ab)^c})$ is a homogeny from $G$ to $H$. In case $\alpha$ and $\beta$ are interalgebraic, by the same arguments $S$ is an isogeny.
	\end{proof}
	\begin{rmk}
		In the proof of Theorem \ref{T:Pillay-alg}, we only use the stability of \textup{ACF}. In fact, as long as one has an analogue of imaginaries of Pillay form in a general setting, Theorem~\ref{T:Pillay-alg} can be generalized. 
	\end{rmk}
	\begin{exmp}
		Let $(M,P)\models T_P$. Let $e_1$ be a generic coset in $(M,+)/(P(M),+)$ and $e_2$ be a generic coset in $M^\times/P^\times$, then $e_1$ and $e_2$ are not interalgebraic as $(P,+)$ and $(P^\times,\times)$ are not isogenous.
		
		However, Theorem~\ref{T:Pillay-alg} does not hold in the theory $\text{DCF}_0$ of differentially closed fields of characteristic 0 (without predicate). Let $(K,\partial)\models\text{DCF}_0$ and $C$ be the constant field and $a\in K$ be a generic point. Let $b,c\in K$ such that $\partial(b)=a$ and $\frac{\partial(c)}{c}=a$. Then $b+C=\{x\in K\mid \partial(x)=a\}$ and $c\cdot C^\times=\{x\in K\mid \frac{\partial(x)}{x}=a\}$. So $\code{b+C}$ and $\code{c\cdot C^\times}$ are interdefinable, but $(C,+)$ and $(C^\times,\cdot)$ are not isogenous.  
	\end{exmp}
	\section{Geometric Rank and Non-forking Independence}
	In this section, based on the analysis of interalgebraicity between imaginaries of Pillay form, we define a geometric rank on all imaginaries in $T_P$ and show some properties of the geometric rank. In particular, we show that geometric rank witnesses non-forking independence.
	\begin{defi}
		Let $(M,P)\models T_P$. Let $e=\lceil G_b(P)*a\rceil\in(M,P)\eq$ be of Pillay form. We define the \textbf{geometric rank} of $e$ to be: 
		$$\R{e}:=(\td{a/P(M)},\,\td{b}-\dim{G_b}).$$
	\end{defi}
	
	Notice that $\td{a/P(M)}$ is independent of the choice of $a$ as different choices are interdefinable over $P$. Moreover, the definition of Pillay form yields $\td{b}=\td{\B e}$ since $b$ and $\B{e}$ are interalgebraic.
	
	The geometric rank $\R{e}$ takes values in $(\mathbb{Z}\times_{\text{lex}}\mathbb{Z})^{\geq 0}$, the non-negative part of $\mathbb{Z}\times_{\text{lex}}\mathbb{Z}$. In the following, we write $\omega\cdot n+z$ for $(n,z)\in\mathbb{Z}\times_{\text{lex}}\mathbb{Z}$. For example, let $e$ be a generic coset in $(M,+)/(P(M),+)$. Then $\R{e}=\omega-1$.
	
	In fact, the geometric rank is invariant under interalgebraicity. 
	\begin{thm}\label{T:welldefined}
		Let $\alpha, \beta$ be imaginaries of Pillay form such that $\beta$ is $L_P$-algebraic over $\alpha$. Then, $\R{\alpha}\geq\R{\beta}$.
		In particular, if $\alpha$ and $\beta$ are interalgebraic, then $\R{\alpha}=\R{\beta}$.
	\end{thm}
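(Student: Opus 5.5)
Write $\alpha=\code{G_{b_\alpha}(P)*a}$ and $\beta=\code{H_{b_\beta}(P)*b}$, with $G:=G_{\B\alpha}(P)$ and $H:=H_{\B\beta}(P)$. The plan is to compare the two coordinates of the geometric rank separately. The first coordinate will be compared directly. The second will be controlled by a transcendence-degree count on the real tuple $ab$ together with the homogeny $S$ from Theorem~\ref{T:Pillay-alg}. Throughout we may assume $(M,P)$ is a lovely pair.

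\emph{Step 1: the first coordinate.} The proof of Theorem~\ref{T:Pillay-alg} shows that $b$ is $L$-algebraic over $aP$. Hence $\td{b/P}\le\td{ab/P}=\td{a/P}$. If this inequality is strict, then $\R\alpha>\R\beta$ lexicographically and we are done. So assume $\td{a/P}=\td{b/P}$. Then $a$ is also algebraic over $bP$, since $\td{ab/P}=\td{a/P}=\td{b/P}$.

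\emph{Step 2: the second coordinate in the equal case.} We must show $\td{\B\alpha}-\dim G\ge\td{\B\beta}-\dim H$. Put $c:=(ab)^c$. By the homogeny from Theorem~\ref{T:Pillay-alg} and the Remark on $\pi_1(S)/\ker(S)\cong\pi_2(S)/\mathrm{coker}(S)$, we have $\dim G=\dim\pi_1(S)=\dim S$, because coker is finite, and $\dim S\le\dim H$. Thus $\dim G\le\dim H$, and it remains to show
\[
\td{\B\alpha}-\td{\B\beta}\ge\dim G-\dim H.
\]
One might hope for the stronger statement $\td{\B\alpha}\ge\td{\B\beta}$. That fails in general, since $B_P$ can grow when the group shrinks (e.g.\ $\beta$ could be a coarser orbit). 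So the argument has to be a genuine rank count.

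For the count, use $c$. Since $ab$ is $P$-independent over $c$ and $a^c\in\dcl{c}$, I would compute $\td{c/\B\alpha}$ in two ways. The orbit $G*a$ realizes $\tp{a/P}$, and the fibre of $\tp{a/\B\alpha P}$-realizations over $\alpha$ is parametrized by $G$ acting freely. So, roughly, $\td{a^c/\B\alpha}=\dim G$ (up to the generic-type bookkeeping), and similarly $\td{b^c/\B\beta}=\dim H$. Here I expect to use Fact~\ref{F:Amador} to express the $\omega$-coefficient-$0$ part. Next, $\B\beta\subseteq\acl{\B\alpha}$, since $\beta\in\aeq\alpha$, and $b^c\in\acl{a^cb'}$ for suitable data. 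Combining these and using additivity of trdeg gives
\[
\td{\B\alpha}+\dim G\ \ge\ \td{c}\ \ge\ \td{\B\beta}+\dim H-(\text{correction}),
\]
which is the wrong direction. So the correct bookkeeping must instead use $\dim S$ as the transcendence degree of $c$ over $\B\alpha\B\beta$ modulo the part fixed by $S$. Concretely, I would prove $\td{c/\B\alpha}=\dim G$ and $\td{c/\B\beta}\ge\dim H$; the latter holds because the $H$-translates of $b$ give distinct conjugates of $b^c$ over $\B\beta$, up to finitely many. Then
\[
\td{\B\alpha}+\dim G=\td{c\B\alpha}\ge\td{c\B\beta}\ge\td{\B\beta}+\dim H,
\]
using $\B\beta\subseteq\acl{c\B\alpha}$. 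This gives $\td{\B\alpha}+\dim G\ge\td{\B\beta}+\dim H$, which is again not the desired sign. So I would look for the opposite inequality for the equal-first-coordinate case: $\td{c/\B\alpha}\le\dim G$ holds because $c$ ranges over $G$-translates, and $\td{c/\B\beta}\ge\dim H$. From these one gets $\td{\B\alpha}\ge\td{\B\beta}+\dim H-\dim G$. Since $\dim H\ge\dim G$ only gives a lower bound, this needs sharpening to the claimed inequality, presumably via the $S$-symmetry and $\td{c/\B\alpha\B\beta}=\dim S$.

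The main obstacle is this Step~2 bookkeeping: computing $\td{(ab)^c/\B\alpha}$ and $\td{(ab)^c/\B\beta}$ exactly in terms of $\dim G$, $\dim H$, and $\dim S$. I would first try the identities $\td{(ab)^c/\B\alpha}=\dim G$, $\td{(ab)^c/\B\beta}=\dim H$, and $\td{(ab)^c/\B\alpha\B\beta}=\dim S$, all derived from generic freeness of the action and genericity of $b_\alpha$ and $b_\beta$ in $P$. The interalgebraic case then follows by applying the inequality in both directions, consistent with the isogeny part of Theorem~\ref{T:Pillay-alg}.
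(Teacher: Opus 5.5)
Your Step~1 matches the paper, but Step~2 has a genuine gap, and it is caused by a false claim. You assert that $\td{\B\alpha}\ge\td{\B\beta}$ ``fails in general''. In fact it is immediate: $\beta\in\aeq{\alpha}$ gives $\aeq{\beta}\subseteq\aeq{\alpha}$, hence $\B\beta=\aeq{\beta}\cap P\subseteq\aeq{\alpha}\cap P=\B\alpha$. You even invoke $\B\beta\subseteq\acl{\B\alpha}$ a few lines later. Combined with $\dim G\le\dim H$, this gives $\td{\B\alpha}-\dim G\ge\td{\B\beta}-\dim H$ term by term, which is exactly the paper's argument.

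For $\dim G\le\dim H$ you also need $\ker(S)$ to be finite, not only $\mathrm{coker}(S)$, since $\dim S=\dim\pi_2(S)+\dim\ker(S)$. Your observation $a\in\acl{bP}$ is what supplies this. Rerun the coker-finiteness argument from the proof of Theorem~\ref{T:Pillay-alg} with the roles of $a$ and $b$ exchanged to get $\ker(S)$ finite. Then $\dim G=\dim\pi_1(S)/\ker(S)=\dim\pi_2(S)/\mathrm{coker}(S)\le\dim H$.

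The transcendence-degree bookkeeping on $c=(ab)^c$ that you try instead cannot work. All elements of a Pillay-form orbit have the same $L_P$-type over $P$, so $(h*b)^c=b^c$ for every $h\in H$. Translates therefore do not produce distinct conjugates, and the group dimension is not visible in $\td{c/\B\beta}$.

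Here is a concrete counterexample. Let $a=(a_1,a_2)$ be algebraically independent over $P$, and set $\alpha=\code{a+(P\times\{0\})}$, $\beta=\code{a+P^2}$ and $b=a$. Then:
\begin{itemize}
\item $\beta\in\deq{\alpha}$, and both first coordinates equal $2$;
\item $c\in\acl{\emptyset}$;
\item $\td{\B\alpha}=\td{\B\beta}=0$, $\dim G=1$ and $\dim H=2$.
\end{itemize}
So your proposed identities $\td{c/\B\alpha}=\dim G$ and $\td{c/\B\beta}\ge\dim H$ both fail. So does the inequality $\td{\B\alpha}\ge\td{\B\beta}+\dim H-\dim G$ that you were trying to sharpen. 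Note also that this inequality, had it been true, would have been stronger than what you need, not weaker, because $\dim H\ge\dim G$.
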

	\begin{proof}
		Let $\alpha=\code{G_{\B\alpha}(P)*a}$ and $\beta=\code{H_{\B\beta}(P)*b}$. In fact, in the proof of Theorem~\ref{T:Pillay-alg}, we already showed that $b$ is $L$-algebraic over $aP(M)$. Hence, we have $\td{b/P(M)}\leq\td{a/P(M)}$. If $\td{a/P(M)}>\td{b/P(M)}$, then it follows that $\R{\alpha}>\R{\beta}$ and we are done. In the rest of the proof, we assume that $\td{a/P(M)}=\td{b/P(M)}$.
		
		Since $\aeq{\beta}\subset\aeq{\alpha}$, $\B{\beta}\subset \B{\alpha}$. So, it follows that $\td{\B{\alpha}}\geq\td{\B{\beta}}$.
		
		Now we look at the group dimension part. As $b$ is $L$-algebraic over $aP(M)$ and we assumed that $\td{a/P(M)}=\td{b/P(M)}$, one can deduce that $a$ is also algebraic over $bP(M)$. Then symmetrically, by the same arguments as in the proof of Theorem~\ref{T:Pillay-alg}, the kernel of $S$ is also finite.
		
		As the homogeny $S$ induces an isomorphism $\pi_1(S)/\text{ker}(S)\xrightarrow{\sim}\pi_2(S)/\text{coker}(S)$ and $\pi_1(S)$ has finite index in $G$, we have: $$\dim{G}=\dim{\pi_1(S)/\text{ker}(S)}=\dim{\pi_2(S)/\text{coker}(S)}=\dim{\pi_2(S)}\leq\dim{H}.$$
		Therefore, $\td{\B\alpha}-\dim{G}\geq\td{\B{\beta}}-\dim{H}$, i.e., $\R{\alpha}\geq\R{\beta}$.
	\end{proof}
	\begin{rmk}
		With the notation from Theorem~\ref{T:welldefined}, if $\alpha$ and $\beta$ are interalgebraic, we conclude not only that $\R{\alpha}=\R{\beta}$ but also that $a$ and $b$ are $L$-interalgebraic over $P$, $\B{\alpha}$ and $\B{\beta}$ are interalgebraic and $G$ and $H$ are isogenous.
	\end{rmk}
	\begin{defi}
		Let $(M,P)\models T_P$ and $e,e'\in(M,P)\eq$.
		\begin{itemize}
			\item By Fact~\ref{F:Pillay-gei}, there exists $\alpha\in(M,P)\eq$ of Pillay form, such that $e$ and $\alpha$ are interalgebraic. We define the geometric rank of $e$ to be:
			$$\R{e}:=\R{\alpha}.$$
			By Theorem \ref{T:welldefined}, this is well-defined.
			\item Now, for any $e,e'\in(M,P)\eq$, we regard the tuple $ee'$ as an imaginary element. We define the geometric rank of $e$ over $e'$ to be: 
			$$\R{e/e'}:=\R{ee'}-\R{e'}.$$
		\end{itemize}
	\end{defi}
	By Theorem \ref{T:welldefined}, since $e'$ is algebraic over $ee'$, we have $\R{e'}\leq\R{ee'}$. Hence $\R{e/e'}\geq0$, thus still takes value in $(\mathbb{Z}\times_{\text{lex}}\mathbb{Z})^{\geq 0}$.
	
	From the definition, it is clear that geometric rank is invariant under automorphism. In the rest of this section, our goal is to show that it witnesses non-forking independence. 
	\begin{defi}
		Let $X\subset M^n$ be an $L_P$-definable set and $G\subset P(M)^m$ be an algebraic group defined over $\code{X}$. We call $X$ a \textbf{$G$-torsor} if $G$ acts freely and transitively on $X$, the group action is given by a rational function that can be defined over $a^c$ for any $a\in X$, and the group action is regular on $X$. Moreover, if for any $a,a'\in X$, $\tP{a/\code{X}P}=\tP{a'/\code{X}P}$, we say that $X$ is a \textbf{homogeneous $G$-torsor}.
	\end{defi}
	In fact, an imaginary is the code of a homogeneous torsor of a connected group if and only if it is of Pillay form.
	\begin{prop}
		Let $e=\code{G_b(P)*a}$ be an imaginary of Pillay form. Then for any $a'\in G_b(P)*a$, the tuples $a$ and $a'$ have the same $L_P$-type over $eP$. In particular, $G_b(P)*a$ is a homogeneous $G_b(P)$-torsor. Conversely, for any connected group $G$, the code of any homogeneous $G$-torsor is an imaginary of Pillay form.
	\end{prop}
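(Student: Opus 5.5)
The forward direction splits into two claims: first that any two elements of the orbit have the same $L_P$-type over $eP$, and then that the orbit meets every clause in the definition of a homogeneous $G_b(P)$-torsor.

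For the first claim, fix $a'=g*a$ with $g\in G_b(P)$. Since $g\in P$ and $\mu_b$ is defined over $b\in P$, the map $x\mapsto \mu_b(g,x)$ is an $L$-definable bijection over $P$ on the generic part of $V_b$. It sends $a$ to $a'$, and $a$ is generic in $V_b$ over $P$. I then plan to show that $(M,P)$ has an automorphism fixing $P$ pointwise that maps $a$ to $a'$. In a lovely pair, tuples $a,a'$ that are $L$-generic in $V_b$ over $P$ realize the same $L$-type over $P$, because $V_b$ is absolutely irreducible and defined over $b\in P$. They are also both $P$-independent together with $P$, since $P(aP)=P$. Fact~\ref{F: lovelypairs}(1) then gives $\tP{a/P}=\tP{a'/P}$, so some $\sigma\in\mathrm{Aut}(M,P)$ fixes $P$ pointwise and sends $a$ to $a'$. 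Any such $\sigma$ fixes $G_b(P)$ and $\mu_b$, so it maps the orbit $G_b(P)*a$ onto $G_b(P)*a'=G_b(P)*a$, and hence fixes $e$. This yields $\tP{a/eP}=\tP{a'/eP}$.

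The remaining torsor axioms follow from the defining properties of $W$ and $\phi$. Freeness and the action axioms on $W$ hold by $\phi$. Transitivity on the orbit is immediate. The action is given by $\mu_b$, which is defined over $b$, and $b\in\acl{a^c}$ because $b$ is the canonical parameter of $V_b\ni a$ and $b\in P$. Regularity follows from genericity of $a$, since the orbit points are generic in $V_b$.

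For the converse, let $X$ be a homogeneous $G$-torsor with $G$ connected, and put $e=\code X$. Pick $a\in X$. My plan is to choose $B$ as the locus over the prime field of a code $b$ for the data: the group $G$, the rational action, and the variety $V_b:=\mathrm{Loc}(a/P)$. All of these are defined over $P$, and by the torsor definition they can in fact be taken over $a^c$. Then $a$ is $L$-generic in $V_b$ over $P$ by construction. Homogeneity means every point of $X$ has the same type over $P$, so every point of $X$ is generic in $V_b$, and $G(P)$ acts generically freely on $V_b$ by rational maps. I then take the uniform family $G\to B$, $\mu\subset G\times_B V\times_B V$ by spreading these data out over $\mathrm{Loc}(b)$, which gives a Pillay sort $\mathcal S(B,V,G,\mu)$ with $e=\code{G_b(P)*a}$. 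What is left is to show that $b$ is generic in $B(P)$ and that $V_b$ is absolutely irreducible and canonically defined over $b$. The first holds by choice of $B$. The second holds because $V_b$ is the locus of a type over the algebraically closed field $P$, and we replace $b$ by the canonical parameter of that locus.

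I expect the main obstacle to be this converse step, specifically ensuring that the parameter $b$ coding the variety is also enough to define the group and the action uniformly, with $\mu_b$ generically free for generic $b$. I would handle this by letting $b$ code the triple consisting of the variety, the group and the action, and then using the fact that $G$ and its action are over $a^c$. This makes $b$ interalgebraic with $\B e$, after which definability in families in ACF gives the uniform $\emptyset$-definable $\mu$.
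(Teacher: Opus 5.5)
Your proposal is correct and follows the paper's argument in both directions. In the forward direction, both proofs note that $a'$ is again generic in $V_b$ over $P$, so $\tp{a/P}=\tp{a'/P}$, which Fact~\ref{F: lovelypairs}(1) upgrades to $\tP{a/P}=\tP{a'/P}$, and $e$ is obtained from $a'$ exactly as from $a$ (you phrase this with an automorphism fixing $P$, the paper by noting $e\in\deq{a}$ uniformly). In the converse, both take $V_b=\mathrm{Loc}(a/P)$ with $b$ interdefinable with $a^c$, use that $G$ and the action are defined over $a^c$, and use homogeneity to transfer the free action from $X$ to all generics of $V_b$; your spreading of the data into a uniform $\emptyset$-definable family over $\mathrm{Loc}(b)$ just makes explicit a step the paper leaves implicit.
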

	\begin{proof}
		From the definition of Pillay form, Loc$(a/P)=$Loc$(a'/P)=V_b$. Hence $\tp{a/P}=\tp{a'/P}$, so $\tP{a/P}=\tP{a'/P}$. Since $G_b$ and the group action are defined over $b\in\deq{a}$, it follows that $e\in\deq{a}$. Hence $\tP{a/eP}=\tP{a'/eP}$. 
		
		Conversely, let $X$ be a homogeneous $G$-torsor and fix $a\in X$. Then all elements in $X$ have the same $L$-type over $P$. Let $V_b=$Loc$(a/P)$ be the irreducible variety defined over $b=a^c= P\cap \deq{\code{X}}$. Since $X$ is a homogeneous $G$-torsor and the group $G$ is defined over $b$ and acts on $X$ by rational functions, $G$ also acts on the generics of $V_b$. Hence the action is a generically free rational group action. Thus $\code{G*a}$ is of Pillay form.
	\end{proof}
	\begin{rmk}\label{R: gRcalculation}
		Let $X$ be a homogeneous $G$-torsor (,where $G$ is not necessarily connected).  Then
		$$\R{\code{X}}=\omega\cdot\dim{\overline X^P}\!\!+\td{\B{\code{X}}}-\dim{G},$$
		where $\overline X^P\!\!$ denotes the smallest Zariski closed subset defined over $P$ containing $X$. Indeed, for any $a\in X$, $\dim{\overline X^P}\!\!=\td{a/P}$.
		
		The formula is immediate when $G$ is connected. In general, there exists some $G^0$-torsor that is interalgebraic with $\code{X}$, hence the same formula applies.
	\end{rmk}
	\begin{lemma}\label{L:hmg}
		Let $X$ be a $G$-torsor for some definable group $G$ in $P$. Suppose $Y\subset X$ is a definable subset that isolates a type over $\code{Y}P$ such that the group action is definable over $\code{Y}$. Then there exists a definable subgroup $H\subset G$ such that $Y$ is a homogeneous $H$-torsor.
	\end{lemma}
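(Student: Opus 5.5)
The natural candidate is the setwise stabilizer of the type isolated by $Y$. Fix $a\in Y$ and put $e:=\code{Y}$ and $q:=\tP{a/eP}$; by hypothesis $q$ is isolated and its realizations are exactly the elements of $Y$. Define
$$H:=\{g\in G\mid g*Y=Y\}.$$
Since $Y$ is definable over $e$ and the action is definable over $e$, $H$ is an $e$-definable subgroup of $G$, hence definable in $P$. Because $Y\subset X$ and $G$ acts freely on $X$, $H$ acts freely on $Y$. It remains to show that $H$ acts transitively on $Y$ and that the torsor data (rationality, definability over $a^c$, regularity) carry over from $X$ to $Y$.

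Transitivity is the key step. Take $a,a'\in Y$. Since $X$ is a $G$-torsor, there is a unique $g\in G$ with $a'=g*a$, and $g\in\dclP{aa'}$ because the action is regular and free. I want to show $g\in H$, i.e.\ $g*y\in Y$ for every $y\in Y$. Both $a$ and $a'$ realize $q$, so there is an automorphism $\sigma$ of a sufficiently saturated model fixing $eP$ pointwise with $\sigma(a)=a'$. Since $g\in G\subset P$, $\sigma$ fixes $g$. Now let $y\in Y$ be arbitrary and write $y=h*a$ for the unique $h\in G$. Then
$$\sigma(y)=\sigma(h)*\sigma(a)=h*a'=h*(g*a).$$
The troublesome point is that $h*g*a$ need not equal $g*h*a$ when $G$ is non-abelian. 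I would handle this by using the right action: define the map $a\mapsto a*g$, which is well defined because $X$ is a torsor. Concretely, fix $a$ and identify $X$ with $G$ via $h\mapsto h*a$. An automorphism $\sigma$ over $eP$ satisfies $\sigma(h*a)=h*\sigma(a)$, so $\sigma$ restricted to $X$ is right multiplication by $g$ in this identification. Hence $Y$, seen as a subset $Y_a\subset G$, satisfies $Y_ag=Y_a$ for every $g\in Y_a^{-1}Y_a$ obtained in this way, since $\sigma$ preserves $Y$. Under the identification, $Y_a$ is then a right coset of the stabilizer $K=\{g:Y_ag=Y_a\}$, and $1\in Y_a$ gives $Y_a=K$, a subgroup. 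Translating back, $Y=K*a$. Setting $H:=K$, which is $aP$-definable and in fact has its own code in $P$, gives transitivity. I would check that $K$ equals the stabilizer of $Y$ under the left action, or else simply take $H:=K$ directly as the acting group, which is conjugate inside $G$ by the identification.

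For the remaining torsor conditions, the action of $H$ is the restriction of the rational action of $G$, so it is rational, regular, and definable over $a^c$. Homogeneity, meaning all elements of $Y$ have the same $L_P$-type over $\code{Y}P$, holds by the isolation hypothesis. The main obstacle I expect is the non-commutativity in the transitivity step. I would handle it as above by identifying $X$ with $G$ via a base point, with the group defined over $\code{Y}$ rather than merely over $aP$. If that fails, I would replace $G$ by $G$ acting on the right, using that a torsor for $G$ is also a torsor for the opposite group, which has the same definability properties.
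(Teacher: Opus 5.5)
Your argument is correct, but it reaches transitivity by a detour that the paper avoids.

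\textbf{The paper's route.} The paper takes $H=\{g\in G\mid \forall y\in Y,\ g*y\in Y\}$ and argues directly. If $g*y=y'$ with $y,y'\in Y$, then any $y''\in Y$ has the same type as $y$ over $g\code{Y}$, since $g\in P$. So $y''$ also satisfies $g*x\in Y$, and hence $g\in H$. Closure under inverses follows from the same trick applied to $y$ and $g*y$; your choice $g*Y=Y$ makes that step automatic.

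The non-commutativity you worry about comes from moving the base point $a$ to $a'$ and then tracking $h*a$. If you instead move $a$ to an arbitrary $y$, the translation $x\mapsto g*x$ is carried along for free, because $g$ is a parameter from $P$.

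\textbf{Your route.} Your workaround is nevertheless valid: you identify $X$ with $G$ via $h\mapsto h*a$ and show that $Y_a=\{h\in G\mid h*a\in Y\}$ equals its own right stabilizer. Hence $Y_a$ is a subgroup $K$ with $Y=K*a$, which is a slightly more explicit description of $Y$ than the paper gives.

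Two things should be tidied.
\begin{enumerate}
\item Saturation does not give an automorphism fixing all of $P$ pointwise, because $P$ is not small. You only need its first-order consequences, and these follow from $\tP{a/\code{Y}P}=\tP{a'/\code{Y}P}$. For $h\in Y_a$, the formula $h*x\in Y$ holds of $a$, hence of $a'=g*a$, which gives $hg\in Y_a$. Likewise, the formula $g^{-1}*x\in Y$ holds of $a'$, hence of $a$, which gives $g^{-1}\in Y_a$.
\item You leave the identification of $K$ with the left stabilizer as something to check, but it is immediate: $g*Y=(gK)*a$, and by freeness this equals $K*a$ iff $g\in K$. State this explicitly. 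It shows the group is defined over $\code{Y}$ and does not depend on $a$, as the definition of a torsor requires. The fallback remarks about conjugation or the opposite group are then unnecessary and can be dropped.
\end{enumerate}
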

	\begin{proof}
		Take
		$$H:=\{g\in G\mid \forall y\in Y,\,g*y\in Y\}.$$
		From the definition, $H$ is closed under group multiplication and can be defined over $P$ as $P$ is stably embedded. For any $y\in Y$, $g\in H$, we have $g*y\in Y$. As $g*y$ and $y$ have the same type over $g\code{Y}$, $y$ also satisfies that there exists $y'\in Y$ such that $g*y'=y$, i.e., $g^{-1}*y\in Y$ for all $y\in Y$. We conclude $H$ is closed under inversion, hence a subgroup of $G$.
		
		As $G$ acts freely on $X$, $H$ also acts freely on $Y$. For any $y,y'\in Y$, since $G$ acts transitively on $X$, there exists $g\in G$ such that $g*y=y'$. Then, for any $y''\in Y$, as $y$ and $y''$ have the same type over $g\code{Y}$, $g*y''\in Y$. Thus, $g\in H$, that is, $H$ acts transitively on $Y$.
	\end{proof}
	Since both non-forking independence and the geometric rank are invariant under interalgebraicity, it is enough to consider imaginaries of Pillay form in what follows. Take $e_i=\code{X_i}$, where $X_i$ is a homogeneous $G_i$-torsor for some $G_i=G^i_{b_i}(P)$ for $i=1,2,3$, where the $G_{b_i}$'s are connected algebraic groups.
	
	\begin{lemma}[see also {\cite[2.5 and 2.6]{PILLAY200713}}]\label{L:e12}
		There exists a definable subgroup $G_{12}\subset G_1\times G_2$, and a homogeneous $G_{12}$-torsor $X_{12}\subset X_1\times X_2$ such that $e_{12}:=\code{X_{12}}$ is interalgebraic with the tuple $e_1e_2$.
	\end{lemma}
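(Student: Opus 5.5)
Fix $a_1\in X_1$ and $a_2\in X_2$. The plan is to choose them so that the pair $(a_1,a_2)$ is $L$-independent over $P$ in the appropriate generic sense, and then carve out of $X_1\times X_2$ the set of pairs with the same type as $(a_1,a_2)$. Lemma~\ref{L:hmg} then turns this set into a homogeneous torsor.

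First, $X_1\times X_2$ is a $G_1\times G_2$-torsor. The action is coordinatewise, rational, and defined over $(a_1a_2)^c$, since each factor action is defined over $a_i^c$, which lies in $\dcl{(a_1a_2)^c}$. Let $p=\tP{a_1a_2/e_1e_2P}$. By $\omega$-stability, $p$ has finitely many extensions of maximal Morley rank to the relevant parameters. Its restriction to $\aeq{e_1e_2}$ together with $P$ is stationary-like enough to be isolated by a single formula.

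More precisely, I would set $E=\aeq{e_1e_2}$ and consider $q=\tP{a_1a_2/EP}$. Because $P$ is stably embedded, the realizations of $q$ inside $X_1\times X_2$ form an $L_P$-definable set $Y$ over $E\cup(a_1a_2)^c$. To see this:
\begin{itemize}
\item types over $P$ of elements of $X_1\times X_2$ reduce, via Fact~\ref{F: lovelypairs}(1) and the $P$-independence of $(a_1a_2,(a_1a_2)^c)$, to $L$-types over $P$;
\item these $L$-types are determined by a Zariski locus over the finite tuple $(a_1a_2)^c$ together with the finitely many algebraic data in $E$.
\end{itemize}
Thus $Y$ is definable, and it isolates $q$ over $\code{Y}P$. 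We may arrange $\code{Y}\in E$ and the group action definable over $\code{Y}$, by including $(a_1a_2)^c$ in the data coded by $Y$.

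Lemma~\ref{L:hmg} then yields a definable subgroup $G_{12}\subset G_1\times G_2$ such that $X_{12}:=Y$ is a homogeneous $G_{12}$-torsor.

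It remains to check interalgebraicity. First, $e_{12}=\code{Y}$ lies in $\aeq{e_1e_2}$ because $Y$ is one of only finitely many conjugates over $e_1e_2$: the images of $Y$ under $\textup{Aut}(/e_1e_2)$ correspond to the finitely many completions of $p$ over $E$. Conversely, $Y\subset X_1\times X_2$ and $Y$ is nonempty, so the projections of $Y$ determine $X_1$ and $X_2$, since any element of $X_i$ determines $X_i$ as its orbit. Hence $e_1e_2\in\deq{e_{12}}$.

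The main obstacle is the claim that realizations of $q$ form a definable set coded inside $\aeq{e_1e_2}$. In particular, one must show that $(a_1a_2)^c$ contributes only algebraically over $e_1e_2$; without that, $\code{Y}$ might code a non-algebraic parameter from $P$. To handle this, I would use $\B{e_1e_2}$ and choose $a_1,a_2$ with $a_1\ind_{P}a_2$ in the sense of generic position relative to $E$, following Pillay's 2.5–2.6. I would first try replacing $Y$ by the union over the finitely many $e_1e_2$-conjugates of $(a_1a_2)^c$ of the corresponding loci, which is $E$-definable.
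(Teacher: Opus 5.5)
There is a genuine gap, located exactly where you place the main obstacle: nothing in your argument makes $(a_1a_2)^c$ algebraic over $e_1e_2$, and for a badly chosen pair it is not.

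Take $e_1=e_2=\code{a+P}$ with $a\notin P$, and $(a_1,a_2)=(a,a+c)$ with $c\in P$ not algebraic over $e_1$. Then $(a_1a_2)^c$ is interdefinable with $c$. The realizations of $\tP{a_1a_2/EP}$ in $X_1\times X_2$ form the set $\{(x,x+c)\mid x\in a+P\}$, and its code is interdefinable with $(e_1,c)$, which is not in $\aeq{e_1e_2}$. So the claim that $Y$ has only finitely many conjugates over $e_1e_2$ fails. In fact, ``including $(a_1a_2)^c$ in the data coded by $Y$'' is precisely what pushes $\code{Y}$ out of $E$.

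Your remedies do not repair this:
\begin{itemize}
\item Generic position is the wrong direction: a generic pair of $X_1\times X_2$ over $e_1e_2$ is exactly of the form $(a,a+c)$ with $c$ generic.
\item The condition $a_1\ind_P a_2$ is impossible here, since $a_2-a_1\in P$.
\item The union over ``the finitely many $e_1e_2$-conjugates of $(a_1a_2)^c$'' presupposes the algebraicity to be proved. Even granting it, a union of several such loci no longer isolates a single type over its code and $P$, so Lemma~\ref{L:hmg} would not apply.
\end{itemize}
Separately, the assertion that the type over $EP$ is ``stationary-like enough to be isolated by a single formula'' is unsupported, so you have not established that $Y$ is definable either.

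The missing idea is to take a pair of \emph{minimal} rank rather than a generic one. With $B:=\aeq{e_1e_2}$, use $\omega$-stability to pick $(a_1,a_2)\in X_1\times X_2$ with $\tP{a_1a_2/B}$ isolated. Since $(a_1a_2)^c\in\dclP{a_1a_2}\cap P$ and $P$ is stably embedded, $\tp{(a_1a_2)^c/\B{e_1e_2}}$ is isolated. As $\B{e_1e_2}$ is an algebraically closed field, hence a model of $T$, this isolated type is realized in it. Therefore $(a_1a_2)^c\in\B{e_1e_2}$ and $a_1a_2\indi{P}_{e_1e_2}P$.

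The isolating formula then defines $X_{12}$ over $B$, so $\code{X_{12}}\in\aeq{e_1e_2}$ comes for free. Types over the algebraically closed set $B$ are stationary; together with $a_1a_2\indi{P}_{B}P$, this shows that $X_{12}$ isolates a type over $BP$, which is exactly the hypothesis of Lemma~\ref{L:hmg}. Your argument for $e_1e_2\in\deq{e_{12}}$ is fine as it stands.
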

	\begin{proof}
		Let $B:=\aeq{e_1e_2}$, $P(B):=B\cap P(M)=\B{e_1e_2}$. Since $T_P\eq$ is $\omega$-stable, there exists $(a_1,a_2)\in X_1\times X_2$ such that $p:=\tP{a_1a_2/B}$ is isolated. Since $(a_1a_2)^c\in\dclP{a_1a_2}\cap P$ and $P$ is stably embedded, $\tp{(a_1a_2)^c/P(B)}$ is also isolated. However, as $P(B)$ is a model of $T$, all the isolated types are realized. Hence, $(a_1a_2)^c\in B$. Therefore, $a_1a_2\indi{P}_{e_1e_2}P$.
		
		Let $X_{12}$ denote the definable set corresponding to the isolating formula, we have $\code{X_{12}}\in B$. As $X_1\times X_2$ is defined over $B$ and contains $a_1a_2$, $X_{12}$ is a subset of $X_1\times X_2$. Notice that $p$ is stationary and $a_1a_2\indi{P}_{B}P$, $X_{12}$ also isolates a type over $BP$. Apply Lemma~\ref{L:hmg} to $X_{12}\subset X_1\times X_2$, where $X_1\times X_2$ is a $G_1\times G_2$-torsor. We find:
		$$G_{12}:=\{g\in G_1\times G_2\mid \forall x\in X_{12},\,g*x\in X_{12}\},$$
		such that $X_{12}$ is a homogeneous $G_{12}$-torsor. Moreover, since $\tP{x/B}$ is stationary, $G_{12}$ is connected.
		
		As $X_{12}$ is defined over $B=\aeq{e_1e_2}$, $e_{12}:=\code{X_{12}}\in\aeq{e_1e_2}$. On the other hand, for any $(a_1,a_2)\in X_{12}$, $e_1\in\deq{a_1},\,e_2\in\deq{a_2}$ uniformly. Hence $e_1e_2\in\deq{e_{12}}$.
	\end{proof}
	
	\begin{rmk}
		From the proof, $e_{12}$ depends on the choice of the elements of minimal rank in $X_1\times X_2$. In fact, there can be infinitely many $e_{12}$ satisfying the condition in Lemma~\ref{L:e12}, and they are all interalgebraic. In the following, we fix any of these to be $e_{12},\,G_{12}$ and $X_{12}$.
	\end{rmk}
	
	In the rest of this section, we will study $\R{e_1/e_2}-\R{e_1/e_2e_3}$ and whether $e_1\indi{P}_{e_2}e_3$. In either case, we may substitute $e_1$ with $e_{12}$ and $e_3$ with $e_{23}$, as they are interalgebraic over $e_2$. On the other hand, take any $a_i\in X_i$ for $i=1,2,3$, and if $a_1\nind_{a_2P}a_3$, then it is easy to see that $\R{e_1/e_2}-\R{e_1/e_2e_3}>0$ and $e_1\nindi{P}_{e_2}e_3$ both hold. We will therefore focus on the case that $a_1\ind_{a_2P}a_3$.
	
	Let $H_1\subset G_2$ denote the image of the projection $G_{12}\hookrightarrow G_1\times G_2\twoheadrightarrow G_2$, and let $H_3\subset G_2$ denote the corresponding image of $G_{23}$. For any $(a_1,a_2)\in X_{12},(a_2',a_3)\in X_{23}$, there is a unique $g\in G_2$ such that $g*a_2=a_2'$ as $a_2,a_2'\in X_2$ and $X_2$ is a $G_2$-torsor. This determines a unique double coset $K:=H_3gH_1$ in $G_2$ which is independent of the choice of $a_2,a_2'$. Indeed,
	$$K=\{g\in G_2\mid \exists xyy'z\,(x,y)\in X_{12}\wedge(y',z)\in X_{23}\wedge g*y=y'\}.$$
	We can see that $K$ is definable over $e_{12}e_{23}$. Since $K$ is a (real) non-empty definable set in $P(M)$, which is purely stably embedded in $M$, and $P(M)\models T$ admits EI, we conclude that $K$ is definable over $\B{e_1e_2e_3}$.
	
	For the same reason, the code $\code{K}$ is a real tuple. As $\acl{\code{K}}$ is a model of $T$, $K(\acl{\code{K}})$ is non-empty. We fix $g_0\in K$ such that $g_0\in\acl{\code{K}}$.
	
	\begin{lemma}\label{L:e123}
		Let $g_0$ be taken as above. Let
		$$G_{123}:=\{(g_1,g_0^{-1}g_2g_0,g_2,g_3)\mid(g_1,g_0^{-1}g_2g_0)\in G_{12},(g_2,g_3)\in G_{23}\};$$
		$$X_{123}:=\{(x,y,y',z)\mid(x,y)\in X_{12},\,(y',z)\in X_{23},\,g_0*y=y'\}.$$
		Then $X_{123}$ is a $G_{123}$-torsor and is a finite union of homogeneous torsors for the connected component of $G_{123}$. Moreover, the code of each torsor is interalgebraic with the tuple $e_1e_2e_3$.
	\end{lemma}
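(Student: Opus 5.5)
I will prove Lemma~\ref{L:e123} in three steps: first that $X_{123}$ is a $G_{123}$-torsor, then that it splits into finitely many homogeneous torsors for the connected component, and finally that each of their codes is interalgebraic with $e_1e_2e_3$.

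\emph{Step 1: $X_{123}$ is a $G_{123}$-torsor.} I let $G_1\times G_2\times G_2\times G_3$ act coordinatewise on $X_1\times X_2\times X_2\times X_3$.

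For invariance, take $(g_1,g_0^{-1}g_2g_0,g_2,g_3)\in G_{123}$ and $(x,y,y',z)\in X_{123}$. Since $(g_1,g_0^{-1}g_2g_0)\in G_{12}$, the pair $(g_1x,g_0^{-1}g_2g_0y)$ lies in $X_{12}$. Since $(g_2,g_3)\in G_{23}$, the pair $(g_2y',g_3z)$ lies in $X_{23}$. The linking condition is preserved, because
\[
g_0\cdot g_0^{-1}g_2g_0\,y=g_2g_0y=g_2y'.
\]
Freeness is inherited from the free action of $G_1\times G_2\times G_2\times G_3$ on $X_1\times X_2\times X_2\times X_3$.

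For transitivity, take two points $(x,y,y',z)$ and $(\tilde x,\tilde y,\tilde y',\tilde z)$ of $X_{123}$. Since $X_{12}$ is a $G_{12}$-torsor, some $(g_1,h)\in G_{12}$ sends $(x,y)$ to $(\tilde x,\tilde y)$. Since $X_{23}$ is a $G_{23}$-torsor, some $(g_2,g_3)\in G_{23}$ sends $(y',z)$ to $(\tilde y',\tilde z)$. From $y'=g_0y$ and $\tilde y'=g_0\tilde y$ we get $g_2g_0y=g_0hy$. Freeness of $G_2$ on $X_2$ then forces $h=g_0^{-1}g_2g_0$, so the element lies in $G_{123}$.

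The action is rational over $(a_1a_2a_2'a_3)^c$, together with $g_0$, which lies in $P$ and is algebraic over the data. Non-emptiness of $X_{123}$ uses $g_0\in K$: pick $(x,y)\in X_{12}$ and $(y',z)\in X_{23}$ with $g_0y=y'$.

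\emph{Step 2: finitely many homogeneous torsors.} $G_{123}$ is isomorphic to a definable subgroup of $G_1\times G_2\times G_3$, namely the fibre product of $G_{12}$ and $G_{23}$ over $G_2$ twisted by conjugation by $g_0$. Hence it is an algebraic group in $P$, and its connected component $G_{123}^0$ has finite index. The $G_{123}^0$-orbits therefore partition $X_{123}$ into finitely many $G_{123}^0$-torsors.

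To get homogeneity I follow Lemma~\ref{L:e12}: choose a point of $X_{123}$ whose type over $B:=\aeq{e_1e_2e_3}$ is isolated, and apply Lemma~\ref{L:hmg}. The main obstacle is to see that the isolating set is exactly a $G_{123}^0$-orbit. I would argue as follows.
\begin{itemize}
\item All points of $X_{123}$ have $L$-types over $P$ that are generic in a single irreducible locus. The reason is that the parametrisation $G_{123}\times\{pt\}\to X_{123}$ is rational over $P$, and the type of $a_1a_2$ over $P$ is determined inside $X_{12}$.
\item The stabiliser subgroup produced by Lemma~\ref{L:hmg} is a finite-index subgroup $H$ of $G_{123}$. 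It contains $G_{123}^0$ because it is type-definably stationary, just as in the proof of Lemma~\ref{L:e12}.
\item So each $G_{123}^0$-orbit is a union of, hence equal to, a homogeneous orbit. Equivalently, $H$ is forced to equal $G_{123}^0$ by stationarity of the isolated types in the $\omega$-stable theory $T_P$.
\end{itemize}
By the Proposition, each code is then of Pillay form.

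\emph{Step 3: interalgebraicity with $e_1e_2e_3$.} Each $G_{123}^0$-torsor $Y\subseteq X_{123}$ is one of finitely many orbits. Its code is therefore algebraic over $\code{X_{123}}$, which is definable from $e_{12}$, $e_{23}$ and $g_0$. All three of these are in $\aeq{e_1e_2e_3}$, using $g_0\in\acl{\code K}\subseteq\B{e_1e_2e_3}$. Conversely, from any point $(x,y,y',z)\in Y$ one defines $e_1,e_2,e_3$ uniformly, as in the last paragraph of Lemma~\ref{L:e12}, so $e_1e_2e_3\in\deq{\code Y}$.
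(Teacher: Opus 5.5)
Your Step 1 is correct (and more explicit than the paper), and Step 3 is fine once Step 2 holds. The gap is in Step 2. You never use the standing assumption $a_1\ind_{a_2P}a_3$ made just before the lemma, and without it the lemma is false.

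\emph{Counterexample.} Let $X_2$ be a single point of $P$ with $G_2$ trivial, and let $X_1=X_3=a+P$ with $a\notin P$. Then $X_{12}=X_1\times X_2$, $X_{23}=X_2\times X_3$ and $g_0=1$. So $X_{123}\cong(a+P)^2$ is a single orbit of the connected group $G_{123}\cong(P,+)^2$. Yet $(a,a)$ and $(a,a+1)$ have different $L$-types over $P$, so $X_{123}$ is not a finite union of homogeneous $G^0_{123}$-torsors; its homogeneous pieces are the infinitely many sets $\{(x,x+c)\}$ with $c\in P$. Here (a) fails, since $a\nind_P a$.

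\emph{Consequences for your bullets.} Everything you cite for the first bullet holds in this example (rational parametrisation over $P$, the type of $a_1a_2$ over $P$ fixed inside $X_{12}$), but its conclusion fails, so that bullet is not a proof. The second bullet misreads Lemma~\ref{L:e12}. There, stationarity shows that the stabiliser $G_{12}$ is itself connected, not that it has finite index in $G_1\times G_2$; typically it has infinite index. In the example, the stabiliser that Lemma~\ref{L:hmg} produces from an isolated type is the diagonal of $P^2$.

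\emph{What is missing} is an argument that the stabiliser has finite index, and this is where (a) enters.
\begin{enumerate}
\item From the proof of Lemma~\ref{L:e12} one has $a_1a_2\ind_{b_{12}}P$ and $a_2'a_3\ind_{b_{23}}P$. The paper combines these with $a_1\ind_{a_2P}a_3$ by forking calculus to get $\bar a\ind_{g_0b_{12}b_{23}}P$ for every $\bar a\in X_{123}$.
\item Hence $\aeq{\bar a}\cap P=\acl{g_0b_{12}b_{23}}=:\bar B\subseteq\aeq{\code{X_{123}}}$.
\item Write $\bar a=\bar g*\bar a_0$ with $\bar a_0$ in the prime model over $\code{X_{123}}$. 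Then $\bar g\in G_{123}(\bar B)$, so every point of $X_{123}$ lies in that prime model.
\item So every type in $X_{123}$ over $\aeq{\code{X_{123}}}$ is isolated, and there are only finitely many.
\item Each realisation satisfies $\bar a\indi{P}_{\code{X_{123}}}P$, so by stationarity each realisation set is a homogeneous torsor by Lemma~\ref{L:hmg}.
\item These sets are $G_{123}$-translates of one another, so each has Morley rank $\dim G_{123}$ and is therefore a $G^0_{123}$-torsor.
\end{enumerate}
This argument also yields $\B{\code{Y}}=\acl{g_0b_{12}b_{23}}$, which the rank computation after the lemma relies on.
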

	\begin{proof}
		It is easy to check that $G_{123}$ is a subgroup of $G_{12}\times G_{23}$, and $X_{123}$ is a $G_{123}$-torsor in a natural way and is defined over $\aeq{e_1e_2e_3}$.
		
		Take any $(a_1,a_2,a_2',a_3)\in X_{123}$ where $a_2'=g_0*a_2$. Let $b_{12}$ denote $\B{\code{X_{12}}}$ and $b_{23}$ denote $\B{\code{X_{23}}}$. In the proof of Lemma~\ref{L:e12}, we showed $a_1a_2\ind^{P}_{e_1e_2}P$. As $\Cb{\tp{a_1a_2/P}}=\Cb{\tP{a_1a_2/P}}$, it follows that $a_1a_2\ind_{b_{12}}P$. By monotonicity, $a_1a_2\ind_{g_0a_2b_{12}}a_2P$. As we assumed $a_1\ind_{a_2P}a_3$, we have $a_1a_2\ind_{a_2P}a_2'a_3P$. By transitivity, $a_1a_2\ind_{g_0a_2b_{12}}a_2'a_3P$. Thus, $a_1a_2\ind_{g_0b_{12}b_{23}a_2'a_3}P$. On the other hand, it follows symmetrically that $a_2'a_3\ind_{b_{23}}P$, hence $a_2'a_3\ind_{g_0b_{12}b_{23}}P$. We conclude that $a_1a_2a_2'a_3\ind_{g_0b_{12}b_{23}}P$.
		
		Let $\bar{a}$ denote the tuple $(a_1,a_2,a_2',a_3)$. Let $p:=\tP{\bar{a}/\aeq{\code{X_{123}}}}$. We will show that $p$ is isolated. Let $\bar B$ denote $B_P(\code{X_{123}})$. Since $\bar{a}\ind_{g_0b_{12}b_{23}}P$ and $g_0b_{12}b_{23}\in\aeq{\code{X_{123}}}$, we conclude that $\aeq{\bar{a}}\cap P=\bar B=\acl{g_0b_{12}b_{23}}$. Let $(M_0,\bar B)\preceq (M_1,\bar B)$ be the prime model over $\code{X_{123}}$ and over $\bar a$ respectively. Take any $\bar a_0\in X_{123}(M_0)$. In $(M_1, \bar B)$, there exists $\bar g\in G_{123}$ such that $\bar{a}\in\dcl{\bar g,\bar a_0}$. However, $\bar g\in \bar B\subseteq M_0$, so we have $\bar a\in M_0$. That is, $p$ is realized in the prime model over $\aeq{\code{X_{123}}}$. Therefore, $p$ is isolated. Moreover, from $\bar{a}\ind_{g_0b_{12}b_{23}}P$, we know $\bar{a}\indi{P}_{g_0b_{12}b_{23}}P$, hence $\bar{a}\indi{P}_{\code{X_{123}}}P$.
		
		Let $Y$ denote the set of realizations of $p$, $Y$ is a definable subset of $X_{123}$. Since $p$ is stationary, $p$ admits a unique non-forking extension to $\aeq{\code{X_{123}}}P$. That is, for any $\bar{a},\bar{a}'\in Y$, $\tP{\bar{a}/\aeq{\code{X_{123}}P}}=\tP{\bar{a}'/\aeq{\code{X_{123}}P}}$. Applying Lemma~\ref{L:hmg} to $Y$, we get $Y$ is a homogeneous torsor.
		
		Let $Y'$ be the set of realizations of some other $p'=\tP{\bar{a}'/\aeq{\code{X_{123}}}}$ where $\bar{a}'\in X_{123}$. Similarly, we have $p'$ is isolated and $\bar{a}'\indi{P}_{\code{X_{123}}}P$. Then $Y$ and $Y'$ are the realizations of the extensions of $p$ and $p'$ to $\aeq{\code{X_{123}}P}$. Notice that there exists $g\in G_{123}\subset P$ such that $g*\bar{a}=\bar{a}'$, which is also a translation of $Y$ to $Y'$. We conclude $Y$ and $Y'$ have the same Morley rank. Moreover, since all the types in $\langle X_{123}\rangle\subset S(\aeq{\code{X_{123}}})$ are isolated, there are only finitely many types in $X_{123}$, i.e., $\code{Y}\in\aeq{\code{X_{123}}}$ and $\MRP{Y}=\MRP{X_{123}}=\dim G_{123}$. In fact, $Y$ is a homogeneous $G^0_{123}$-torsor.
		
		At last, we show $\code{Y}$ is interalgebraic with the tuple $e_1e_2e_3$. As we mentioned before, $K$ is defined over $e_{12}e_{23}$, and $g_0\in\acl{\code{K}}$, we know $g_0\in\aeq{e_1e_2e_3}$. From the definition $X_{123}$ is defined over $e_{12}e_{23}g_0$. Hence, $\code{Y}\in\aeq{\code{X_{123}}}\subset\aeq{e_1e_2e_3}$. On the other hand, for any $(a_1,a_2,a_2',a_3)\in Y$, $e_{12}\in\deq{a_1a_2}$ and $e_{23}\in\deq{a_2'a_3}$ uniformly. Therefore, $e_1e_2e_3\in\deq{\code{Y}}$.
	\end{proof}
	In the proof of Lemma~\ref{L:e123}, we showed that for any $\bar{a}\in Y$, $\bar{a}\ind_{g_0b_{12}b_{23}}P$. Hence $\B{\code{Y}}=\acl{g_0b_{12}b_{23}}$. By Remark~\ref{R: gRcalculation}, the geometric rank $\R{e_1e_2e_3}=\R{\code{Y}}$ is given by:
	$$\R{e_1e_2e_3}=\omega\cdot\dim{\overline Y^P}\!\!+\td{g_0b_{12}b_{23}}-\dim{G_{123}}.$$
	
	From the definition of $G_{123}$, it is easy to see that:
	\begin{equation}\label{eq:G1230}
		\dim G_{123}=\dim G_{12}-\dim H_1+\dim G_{23}-\dim H_3+\dim (g_0H_1g_0^{-1}\cap H_3)
	\end{equation}
	Notice that there is a natural bijection between cosets:
	$$g_0H_1g_0^{-1}/(g_0H_1g_0^{-1}\cap H_3)\overset{\sim}{\rightarrow}g_0H_1g_0^{-1}H_3/H_3$$
	We know 
	$$\dim{g_0H_1g_0^{-1}\cap H_3}=\dim{H_1}+\dim{H_3}-\dim{g_0H_1g_0^{-1}H_3}.$$
	As $g_0H_1g_0^{-1}H_3$ is definably bijective to $K=H_3g_0H_1$, $\dim{g_0H_1g_0^{-1}H_3}=\dim{K}$. Combined with equation \eqref{eq:G1230}, we conclude:
	\begin{equation}\label{eq:G123}
		\dim G_{123}=\dim G_{12}+\dim G_{23}-\dim{K}
	\end{equation}
	
	Now, let's calculate the value of $\R{e_1/e_2}-\R{e_1/e_2e_3}$. Recall that we assumed $a_1\ind_{a_2P}a_3$. Hence the $\omega$ part of the geometric rank automatically vanishes. For convenience, we denote $b_i:=\B{e_i}$ and denote $b_{ij}:=\B{e_ie_j}$ as before. We have:
	\begin{equation}
		\begin{split}
			\R{e_1/e_2}-\R{e_1/e_2e_3}&\\
			=\R{e_{12}}+\R{e_{23}}-&\R{e_{123}}-\R{e_2}\\
			=\,\td{b_{12}}-\dim& G_{12}+\td {b_{23}}-\dim G_{23}\\
			-\td{g_0b_{12}b_{23}&}+\dim G_{123}-\td {b_2}+\dim G_2\\
			\overset{\eqref{eq:G123}}{=}\dim G_2-&\dim{K}-(\td {g_0b_{12}b_{23}}-\td {b_{12}b_{23}})+J,
		\end{split}\label{eq:R1/2-R1/23}
	\end{equation}
	where 
	\begin{equation*}
		\begin{split}
			J:=&\td {b_{12}}+\td {b_{23}}-\td {b_2}-\td {b_{12}b_{23}}\\
			=&\td{b_{12}/b_2}-\td{b_{12}/b_{23}}.
		\end{split}
	\end{equation*}
	Since $b_2\in\acl{b_{23}}$, $J\geq0$ and the equality holds iff $b_{12}\ind_{b_2}b_{23}$.
	Take $g\in K$ of maximal transcendence degree over $g_0b_{12}b_{23}$. Then,
	$$\dim{K}=\td{g/g_0b_{12}b_{23}}=\td{gg_0b_{12}b_{23}}-\td{g_0b_{12}b_{23}}.$$
	As $H_1,H_3$ are defined over $b_{12}b_{23}$, and $K=H_3gH_1$, $\code{K}\in\dcl{gb_{12}b_{23}}$. In particular, $g_0\in\acl{\code{K}}\subset\acl{gb_{12}b_{23}}$. Hence, $\dim{K}=\td{gb_{12}b_{23}}-\td{g_0b_{12}b_{23}}$. We obtain: 
	$$\dim G_2-\dim{K}-(\td {b_{12}b_{23}g_0}-\td {b_{12}b_{23}})=\dim G_2-\td{g/b_{12}b_{23}}.$$
	Since $g\in G_2$, $\dim G_2-\td{g/b_{12}b_{23}}\geq\td{g}-\td{g/b_{12}b_{23}}\geq 0$. Substituting into \eqref{eq:R1/2-R1/23}, we conclude:
	\begin{prop}\label{P:gRequivstar}
		Let $e_i=\code{X_i}$ be imaginaries of Pillay form where $X_i$ is a homogeneous $G_i$-torus for $i=1,2,3$. Let $b_i$, $b_{ij}$ and $K$ be defined as above. Take any $a_i\in X_i$. Then 
		$$\R{e_1/e_2e_3}\leq\R{e_1/e_2}.$$
		Moreover, equality holds if and only if the following three conditions are satisfied:
		\begin{equation}
			\tag{$\bigstar$}
			\left\{\begin{aligned}
				\text{(a)}\;& a_1\ind_{a_2P}a_3;\\
				\text{(b)}\;& b_{12}\ind_{b_2}b_{23};\\
				\text{(c)}\;& \text{There exists } g\in K\text{ such that }g\text{ is generic in }G_2 \text{ over }b_{12}b_{23}.
			\end{aligned}\right.
			\label{eq:conditions}
		\end{equation}
	\end{prop}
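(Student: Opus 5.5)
We reduce to the computation just carried out and split according to whether condition (a) holds.

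\emph{Case (a) fails}, i.e.\ $a_1\nind_{a_2P}a_3$. Replace $e_1$ by $e_{12}$ and $e_3$ by $e_{23}$ (Lemma~\ref{L:e12}); since these are interalgebraic over $e_2$, neither side of the inequality changes, by Theorem~\ref{T:welldefined}. We show the $\omega$-coefficient drops strictly.
\begin{itemize}
\item By Remark~\ref{R: gRcalculation}, the $\omega$-coefficient of $\R{e_1e_2}$ is $\td{a_1a_2/P}$, and that of $\R{e_1e_2e_3}$ is $\td{a_1a_2a_2'a_3/P}$.
\item We use a realization $(a_1,a_2,a_2',a_3)$ of the torsor $Y$ from Lemma~\ref{L:e123}. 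Its construction only requires that $X_{123}$ be a $G_{123}$-torsor with $a_2'\in G_2(P)*a_2$, so $a_2'\in\dcl{a_2P}$.
\item Hence $\td{a_1a_2a_2'a_3/P}=\td{a_1a_2a_3/P}$.
\item Likewise $\td{a_2a_3/P}$ is the $\omega$-coefficient of $\R{e_2e_3}$, and $\td{a_2/P}$ is that of $\R{e_2}$.
\item Additivity of transcendence degree gives $\td{a_1/a_2P}>\td{a_1/a_2a_3P}$.
\end{itemize}
So the $\omega$-part of $\R{e_1/e_2}$ strictly exceeds that of $\R{e_1/e_2e_3}$. In the lexicographic order this gives a strict inequality, consistent with the claim that (a) is necessary.

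One subtlety must be checked here. The formula for $\R{e_1/e_2e_3}$ should be computed via $\R{e_1e_2e_3}-\R{e_2e_3}$, not via $e_{12},e_{23}$ as in \eqref{eq:R1/2-R1/23}. So we must verify that $e_2e_3$ is interalgebraic with $e_{23}$, and that $e_1e_2e_3$ is interalgebraic with the code of a torsor whose $P$-closure has dimension $\td{a_1a_2a_3/P}$. The second point needs the genericity argument of Lemma~\ref{L:e123} without using (a). We redo it: choose $(a_1,a_2)$, $(a_2',a_3)$ with $L_P$-types isolated over $\aeq{e_1e_2e_3}$, and argue as in Lemma~\ref{L:e12} applied to $X_1\times X_2\times X_3$.

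\emph{Case (a) holds.} Then the chain \eqref{eq:R1/2-R1/23} applies and gives
$$\R{e_1/e_2}-\R{e_1/e_2e_3}=\bigl(\dim G_2-\td{g/b_{12}b_{23}}\bigr)+J,$$
where $g\in K$ has maximal transcendence degree over $g_0b_{12}b_{23}$. Both summands are nonnegative:
\begin{itemize}
\item $J=\td{b_{12}/b_2}-\td{b_{12}/b_{23}}\ge 0$, because $b_2\subset\acl{b_{23}}$;
\item $\td{g/b_{12}b_{23}}\le\dim G_2$, because $g\in G_2$ and $G_2$ is defined over $b_2\subset b_{12}b_{23}$.
\end{itemize}
This proves the inequality. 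Equality holds iff both terms vanish.
\begin{itemize}
\item $J=0$ is exactly (b), by additivity of transcendence degree.
\item $\td{g/b_{12}b_{23}}=\dim G_2$ means that the element $g\in K$ of maximal degree is generic in $G_2$ over $b_{12}b_{23}$. Conversely, if some $g'\in K$ is generic over $b_{12}b_{23}$, then the maximal element satisfies $\td{g/b_{12}b_{23}}\ge\td{g/g_0b_{12}b_{23}}\ge\td{g'/g_0b_{12}b_{23}}=\dim G_2$, since $g_0$ is algebraic over $\code{K}\subset\acl{g'b_{12}b_{23}}$ and degrees only drop when adding algebraic parameters. So this term vanishes iff (c) holds.
\end{itemize}

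The main obstacle is the failure case of (a): we must make sure the geometric rank of $e_1e_2e_3$ is computed honestly when Lemma~\ref{L:e123}'s independence hypothesis is unavailable. The plan there is to bypass $g_0$ and compare only the $\omega$-coefficients, which depend solely on $\td{\cdot/P}$ of realizations, via Remark~\ref{R: gRcalculation}.
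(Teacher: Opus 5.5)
Your proof follows the paper's route. When (a) fails, the $\omega$-coefficients already separate the two ranks; the paper only says this is ``easy to see''. When (a) holds, you reproduce the chain \eqref{eq:R1/2-R1/23} and show both summands are nonnegative. Your bound $\td{g/b_{12}b_{23}}\le\dim G_2$ via $b_2\subseteq b_{12}$ is fine.

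In the failure case, drop your first suggestion of using $Y$ from Lemma~\ref{L:e123}. Its proof does use (a), to obtain $\bar a\ind_{g_0b_{12}b_{23}}P$, and both the isolation of $p$ and the interalgebraicity of $\code{Y}$ with $e_1e_2e_3$ rest on that. Your fallback is correct: running the proof of Lemma~\ref{L:e12} on $X_1\times X_2\times X_3$ directly gives $\omega$-coefficient $\td{a_1a_2a_3/P}$ for $\R{e_1e_2e_3}$.

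The genuine error is in the converse of your last bullet: that some $g'\in K$ generic over $b_{12}b_{23}$ forces the maximal $g$ to be generic. You assert $\td{g'/g_0b_{12}b_{23}}=\dim G_2$ because $g_0\in\acl{g'b_{12}b_{23}}$. But adding to the base a parameter algebraic over $g'b_{12}b_{23}$ lowers the degree of $g'$ by exactly $\td{g_0/b_{12}b_{23}}$; it does not preserve it. For instance, when $H_1$ and $H_3$ are trivial, $K=\{g_0\}$ is a singleton, so $\td{g'/g_0b_{12}b_{23}}=0$. Yet $g'=g_0$ can be generic in $G_2$ over $b_{12}b_{23}$ with $\dim G_2>0$; this happens in the example following the proposition.

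The fix is as follows. Since $K=H_3hH_1$ for every $h\in K$, we have $\code{K}\in\dcl{hb_{12}b_{23}}$, hence $g_0\in\acl{hb_{12}b_{23}}$ and
\[
\td{h/b_{12}b_{23}}=\td{g_0/b_{12}b_{23}}+\td{h/g_0b_{12}b_{23}}\qquad\text{for all } h\in K.
\]
So maximizing $\td{\cdot/g_0b_{12}b_{23}}$ on $K$ is the same as maximizing $\td{\cdot/b_{12}b_{23}}$. Therefore $\td{g/b_{12}b_{23}}\ge\td{g'/b_{12}b_{23}}=\dim G_2$. With this correction your argument is complete.
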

	The conditions \eqref{eq:conditions} are analogous to Fact~\ref{F: lovelypairs}(4), which characterizes non-forking independence in $T_P$ for real tuples. We now show that \eqref{eq:conditions} also characterizes non-forking independence in $T_P^{\eq}$.
	\begin{prop}
		Let $e_1,e_2,e_3$ be of Pillay form. Under the same notation as above, $e_1\indi{P}_{e_2}e_3$ if and only if \eqref{eq:conditions} holds.
	\end{prop}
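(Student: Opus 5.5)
We reduce non-forking of imaginaries to non-forking of suitable real representatives, and then apply Fact~\ref{F: lovelypairs}(4). By interalgebraicity we replace $e_1$ by $e_{12}$ and $e_3$ by $e_{23}$ (both interalgebraic with the relevant tuples over $e_2$). The question is then whether $e_{12}\indi{P}_{e_2}e_{23}$.

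\emph{Forward direction.} If (a) fails, the paper already notes that $e_1\nindi{P}_{e_2}e_3$. So assume (a) and $e_1\indi{P}_{e_2}e_3$. Then $\B{e_1e_2}\indi{P}_{e_2}\B{e_2e_3}$, since these are algebraic over the respective sides. Since $\B{e_2}=\aeq{e_2}\cap P$ and real tuples in $P$ behave as in $T$ over $P$-points, this gives (b).

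For (c), pick $(a_1,a_2)\in X_{12}$ realizing the non-forking extension over $e_{23}$ and $e_2$, with $a_1a_2\indi{P}_{e_{12}}e_{23}$. Pick $(a_2',a_3)\in X_{23}$ with $a_2'a_3\indi{P}_{e_{23}}a_1a_2$. The element $g$ with $g*a_2=a_2'$ lies in $K$. We must show $g$ is generic in $G_2$ over $b_{12}b_{23}$. Over $e_2$, the pair $(a_2,a_2')$ consists of two elements of the homogeneous torsor $X_2$. Transitivity of forking over $e_2$ gives $a_1a_2\indi{P}_{e_2}a_2'a_3$, and hence $a_2\indi{P}_{e_2b_{12}b_{23}}a_2'$. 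Each of $a_2,a_2'$ realizes over $e_2P$ the type isolated by $X_2$, by homogeneity. Two independent realizations of a homogeneous $G_2$-torsor type differ by a generic element of $G_2$: the map $a_2'\mapsto g$ is a bijection over $a_2$, so $\SUP{g/a_2 b_{12}b_{23}}=\SUP{a_2'/a_2b_{12}b_{23}}$, which is the full rank $\dim G_2$ of the torsor. This yields (c).

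\emph{Converse.} Assume $(\bigstar)$. By Proposition~\ref{P:gRequivstar}, $\R{e_1/e_2e_3}=\R{e_1/e_2}$. The cleanest route is to build a tuple $e_3'\equiv^P_{e_2}e_3$ with $e_1\indi{P}_{e_2}e_3'$. By the forward direction, $e_3'$ satisfies $(\bigstar)$ as well. We then show that $(\bigstar)$ determines $\tP{e_3/e_1e_2}$ up to the data $(b_{23}$ over $b_{12}$, the double coset $K)$, and that this data is unique under (b) and (c).

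Concretely, fix $e_{12}$. Condition (b) together with stationarity of real types over the algebraically closed $b_2$ determines $\tp{b_{23}/b_{12}}$. Condition (c) forces $K$ to contain a $G_2$-generic over $b_{12}b_{23}$. Since $G_2$ is connected, $K$ is then the unique double coset of full dimension, so $\code{K}$ is determined. Given $K$ and $g_0$, Lemma~\ref{L:e123} describes $e_{123}$ via $X_{123}$ up to the finitely many components. With (a), which fixes the $L$-type of $a_3$ over $a_1a_2P$, this yields $e_{12}e_{23}\equiv^P e_{12}e_{23}'$ up to algebraic ambiguity. Forking is invariant under replacing a tuple by one of the same type over $\aeq{e_2}$, so we conclude.

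\emph{Expected obstacle.} The main difficulty is this uniqueness step. The finitely many components in Lemma~\ref{L:e123} and the choice of $g_0$ must not create distinct types. To handle this, I would first work over $\aeq{e_2}$. There the stationarity of non-forking extensions absorbs the finite ambiguity. Alternatively, one could argue by rank: in an $\omega$-stable theory, if $\R{\cdot}$ matched $\SUP{\cdot}$-decrease detection, equality of ranks would give non-forking directly. That route, however, requires the full Theorem~\ref{T:nfequivrk}, which is circular, so the type-counting argument is preferable.
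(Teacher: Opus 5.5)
\textbf{Forward direction.} This is sound and is essentially the paper's argument. Passing at once to the base $e_2b_{12}b_{23}$ works, because homogeneity of $X_2$ gives $a_2'\indi{P}_{e_2}P$, hence $\SUP{a_2'/e_2b_{12}b_{23}}=\dim G_2$. For (b), add one line: since $P$ is stably embedded, the canonical base of $\tP{b_{12}/\aeq{e_2}b_{23}}$ is a real tuple of $P$, so it lies in $b_2$, and Fact~\ref{F: lovelypairs}(4) then applies inside $P$.

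\textbf{Converse.} This direction is not proved. The uniqueness step you flag as the ``main obstacle'' is its entire content, and the sketch does not supply it.
First, (a) does not ``fix the $L$-type of $a_3$ over $a_1a_2P$''. It only says that $\tp{a_3/\acl{a_1a_2P}}$ is the non-forking extension of $\tp{a_3/\acl{a_2P}}$. That base type depends on the element $g$ with $g*a_2=a_2'$ and on how $a_2'a_3$ sits over $\acl{a_2P}=\acl{a_2'P}$. Nothing in your sketch shows this is pinned down by $\tp{b_{23}/b_{12}}$ and $\code{K}$.
Second, Lemma~\ref{L:e123} does not help. It builds $e_{123}$ from a given pair $e_{12},e_{23}$ and says nothing about $\tP{e_{23}/e_{12}}$ being determined by $K$.
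Third, ``stationarity over $\aeq{e_2}$ absorbs the finite ambiguity'' is circular. Stationarity says the non-forking extension of $\tP{e_{23}/\aeq{e_2}}$ to $\aeq{e_2}e_{12}$ is unique. It does not say that every extension compatible with $(\bigstar)$ is that one, and that is exactly the claim to be proved.
To rescue the type-counting route, you would have to show that $(\bigstar)$ determines $\tP{e_{23}/\aeq{e_{12}}}$ outright. For example, you could compute the $L$-type of a $P$-independent configuration containing $a_1a_2a_2'a_3gb_{12}b_{23}$ and invoke Fact~\ref{F: lovelypairs}(1).

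\textbf{How the paper avoids this.} It proves non-forking directly on real representatives.
Take $g\in K$ generic in $G_2$ over $b_{12}b_{23}$, and $(a_1,a_2)\in X_{12}$, $(a_2',a_3)\in X_{23}$ with $g*a_2=a_2'$; this is possible by the definition of $K$.
Since $a_2'\indi{P}_{b_2}P$, you get $\MRP{a_2/a_2'}=\MRP{g/a_2'}=\MRP{g/b_2}=\dim G_2=\MRP{a_2/e_2}$, i.e.\ $a_2\indi{P}_{e_2}a_2'$. This is your forward rank computation run backwards.
From $g\ind_{b_2}b_{23}$ and Fact~\ref{F: lovelypairs}(4) you get $a_2\indi{P}_{a_2'}a_3$, hence $a_2\indi{P}_{e_2}a_2'a_3$.
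From (b) and $g\ind_{b_2}b_{12}b_{23}$ you get $b_{12}\ind_{b_2}gb_{23}$. Together with (a) and Fact~\ref{F: lovelypairs}(4), this gives $a_1\indi{P}_{a_2}a_2'a_3$.
Transitivity yields $a_1a_2\indi{P}_{e_2}a_2'a_3$. Since $e_{12}\in\deq{a_1a_2}$ and $e_{23}\in\deq{a_2'a_3}$, you conclude $e_1\indi{P}_{e_2}e_3$ without any uniqueness-of-type argument.
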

	\begin{proof}   
		$(\Leftarrow)$ Assume \eqref{eq:conditions} holds. Let $g\in K$ be a generic element of $G_2$ over $b_{12}b_{23}$. We fix $a_1,a_2,a_2',a_3\in M$ such that $(a_1,a_2)\in X_{12},\,(a_2',a_3)\in X_{23}$ and $g*a_2=a_2'$.
		
		As $\tP{a_2/e_2}$ is isolated, $\MRP{a_2/e_2}=\dim{G_2}$. Since $a_2'\ind_{b_2}P$, by Fact~\ref{F: lovelypairs}(4) $a_2'\indi{P}_{b_2}P$. Thus $\MRP{g/a_2'}=\MRP{g/b_2}=\dim G_2$ as $g\in P$ and as $g$ is generic in $G_2$ over $b_2$. Since $\tP{a_2/e_2}$ is isolated and its realizations form a $G_2$-torsor, we conclude $\MRP{a_2/e_2}=\dim{G_2}=\MRP{g/a_2'}=\MRP{a_2/a_2'}$. The last equation follows from the fact that $g$ and $a_2$ are interdefinable over $a_2'$. Therefore, $a_2\indi{P}_{e_2}a_2'$.
		
		The rest is done by forking calculus. Since $g\ind_{b_{2}}b_{12}b_{23}$, it follows from (b) that $b_{12}\ind_{b_2}gb_{23}$. Combined with (a), using Fact~\ref{F: lovelypairs}(4), we get $a_1\indi{P}_{a_2}a_2'a_3$. 
		
		On the other hand, since $g\ind_{b_2}b_{23}$, again by Fact~\ref{F: lovelypairs}(4), $a_2\indi{P}_{a_2'}a_3$. As we showed $a_2\indi{P}_{e_2}a_2'$, it follows that $a_2\indi{P}_{e_2}a_2'a_3$. Notice that $a_1\indi{P}_{a_2}a_2'a_3$, we conclude $a_1a_2\indi{P}_{e_2}a_2'a_3$.
		
		As $e_{12}\in\deq{a_1a_2}$ and $e_{23}\in \deq{a_2'a_3}$, we have $e_1\indi{P}_{e_2}e_3$. 
		
		$(\Rightarrow)$ Now, assume $e_1\indi{P}_{e_2}e_3$, i.e., $e_{12}\indi{P}_{e_2}e_{23}$. By the extension property, there exists $(a_1,a_2)\in X_{12}$ such that $a_1a_2\indi{P}_{e_{12}}e_{23}$. So $a_1a_2\indi{P}_{e_2}e_{23}$. Again, we choose $(a_2',a_3)\in X_{23}$ such that $a_2'a_3\indi{P}_{e_{23}}a_1a_2$, then $a_1a_2\indi{P}_{e_2}a_2'a_3$.
		
		Let $g\in K$ be the unique element such that $g*a_2=a_2'$.
		From $a_1a_2\indi{P}_{e_2}a_2'a_3$, we know that $a_1\indi{P}_{a_2}a_2'a_3$. Hence $a_1\ind_{a_2P}a_3$ and $b_{12}\ind_{b_2}gb_{23}$. Thus, both (a) and (b) hold and $g\ind_{b_{23}}b_{12}$.
		
		From $a_1a_2\indi{P}_{e_2}a_2'a_3$, we also have $a_2\indi{P}_{e_2}a_2'$ and $a_2\indi{P}_{a_2'}a_3$. From $a_2\indi{P}_{e_2}a_2'$ one knows that $\MRP{a_2/e_2}=\MRP{a_2/a_2'}$. Hence $\dim G_2=\MRP{a_2/e_2}=\MRP{a_2/a_2'}=\MRP{g/a_2'}$. Notice that $\MRP{g/a_2'}=\mR{g/b_2}$ as $\acl{b_2}=\aclP{a_2'}\cap P$. We have $\dim G_2=\mR{g/b_2}$, i.e., $g$ is generic in $G_2$ over $b_2$. The latter independence $a_2\indi{P}_{a_2'}a_3$ yields $g\ind_{b_2}b_{23}$. Combined with $g\ind_{b_{23}}b_{12}$, we have $g\ind_{b_2}b_{12}b_{23}$. Thus, $g$ is generic in $G_2$ over $b_{12}b_{23}$. Condition (c) is satisfied.
	\end{proof}
	\begin{exmp}
		Let $a\in M\setminus P(M)$ and $b\in P$ generic. Define 
		\[
		e_1 = \code{a + P}, \quad e_2 = \code{a \cdot P^\times}, \quad \text{and} \quad e_3 = \code{a \cdot b + P}.
		\]
		One can check that the tuple $e_1e_2$ is interdefinable with the real element $a$, the tuple $e_2e_3$ is interdefinable with the element $a\cdot b$ and the tuple $e_1e_2e_3$ is interdefinable with the tuple $(a,b)$.
		
		Hence we have
		\[
		\R{e_1 / e_2} = \omega - (\omega - 1) = 1, \quad 
		\R{e_1 / e_2 e_3} = (\omega + 1) - \omega = 1,
		\]
		so that $\R{e_1/e_2e_3}=\R{e_1/e_2}$.
		
		On the other hand, $a$ and $a\cdot b$ are independent in $a\cdot P^\times$ as $b$ is a generic in $P^\times$. (This corresponds to the condition (c) in \eqref{eq:conditions}). Thus, $a\indi{P}_{e_2}a\cdot b$ and hence $e_1\indi{P}_{e_2}e_3$.
		
		Note that the free 1-dimensional contribution of $e_1$ over $e_2$ comes from the group part, while the free 1-dimensional contribution of $e_1$ over $e_2 e_3$ comes from the parameter part ($\B{e}$ part in the definition). These two contributions interact with each other.
	\end{exmp}
	
	In conclusion, for $e_i$ imaginaries of Pillay form, $\R{e_1/e_2e_3}=\R{e_1/e_2}$ and $e_1\indi{P}_{e_2}e_3$ are equivalent, as they are both equivalent to \eqref{eq:conditions}. Moreover, since geometric rank and non-forking independence are invariant under interalgebraicity, and every imaginary is interalgebraic with an imaginary of Pillay form, we obtain:
	\begin{thm}[Theorem~\ref{T:nfequivrk}]
		Let $(M,P)\models T_P$. Let $e_1,e_2,e_3$ be finite tuples in $(M,P)\eq$. Then
		$$\R{e_1/e_2e_3}\leq\R{e_1/e_2},$$
		and
		$$\R{e_1/e_2e_3}=\R{e_1/e_2}\text{ if and only if }e_1\indi{P}_{e_2}e_3.$$
	\end{thm}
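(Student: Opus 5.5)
The plan is to reduce to imaginaries of Pillay form and then cite the two propositions just proved. Given arbitrary finite tuples $e_1,e_2,e_3$ in $(M,P)\eq$, I regard each as a single imaginary. By Fact~\ref{F:Pillay-gei}, I choose $e_i'$ of Pillay form interalgebraic with $e_i$ for $i=1,2,3$; concretely, $e_i'=\code{X_i}$ with $X_i$ a homogeneous $G_i$-torsor for a connected $G_i$.

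The first step is to check that both sides of the claimed statement are unchanged when each $e_i$ is replaced by $e_i'$. For the geometric rank, $e_1e_2$ is interalgebraic with $e_1'e_2'$, and likewise $e_1e_2e_3$ with $e_1'e_2'e_3'$ and $e_2e_3$ with $e_2'e_3'$. By the definition of $\R{\cdot}$ on arbitrary imaginaries, together with Theorem~\ref{T:welldefined}, interalgebraic imaginaries have the same geometric rank. Since $\R{e/e'}=\R{ee'}-\R{e'}$, both $\R{e_1/e_2}$ and $\R{e_1/e_2e_3}$ are unchanged. For forking, $\aeq{e_2}=\aeq{e_2'}$, and $e_1,e_3$ are interalgebraic with $e_1',e_3'$ over the base. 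Non-forking in the stable theory $T_P\eq$ is invariant under replacing tuples and base by interalgebraic ones, since $a\indi P_C b$ iff $\aeq{aC}\indi P_{\aeq{C}}\aeq{bC}$. Hence $e_1\indi P_{e_2}e_3$ iff $e_1'\indi P_{e_2'}e_3'$.

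The second step applies Proposition~\ref{P:gRequivstar} to $e_1',e_2',e_3'$. This gives $\R{e_1'/e_2'e_3'}\le\R{e_1'/e_2'}$, and says equality holds iff \eqref{eq:conditions} holds. The subsequent proposition says \eqref{eq:conditions} holds iff $e_1'\indi P_{e_2'}e_3'$. Chaining these equivalences and transporting back via the first step yields the theorem.

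The main obstacle is hidden inside Proposition~\ref{P:gRequivstar}, which was derived under the standing assumption (a), that $a_1\ind_{a_2P}a_3$. The case where (a) fails must be checked separately. If (a) fails, then $\td{a_1/a_2a_3P}<\td{a_1/a_2P}$. This drops the $\omega$-coefficient of $\R{e_1/e_2e_3}$ strictly below that of $\R{e_1/e_2}$, giving strict inequality. At the same time, by Fact~\ref{F: lovelypairs}(4), the failure of (a) also forces forking, as the paper remarks. So in that case both sides of the equivalence fail and the inequality still holds.

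A second subtlety is that the replacements $e_{12},e_{23}$ used in that argument are only interalgebraic with $e_1e_2$ and $e_2e_3$ over $e_2$. The same invariance argument from the first step covers this.
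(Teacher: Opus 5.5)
Your proposal is correct and follows the paper's argument. You reduce to Pillay form using the invariance of $\R{\cdot}$ and of $\indi{P}$ under interalgebraicity, then chain Proposition~\ref{P:gRequivstar} with the subsequent proposition characterizing $e_1\indi{P}_{e_2}e_3$ by \eqref{eq:conditions}, and your separate treatment of the case where (a) fails matches the paper's remark preceding that computation (minor point: $e_{12}$ is interalgebraic with $e_1e_2$ outright, hence with $e_1$ over $e_2$).
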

	\begin{defi}
		Let $e\in (M,P)\eq$ be an imaginary and $C\subset(M,P)\eq$ (possibly infinite). We define $$\R{e/C}:=\R{e/C_0},$$ 
		for some finite $C_0\subset\aeq{C}\text{ such that } e\indi{P}_{C_0} C$.
	\end{defi}
	Since $T_P$ is $\omega$-stable, there exists finite $C_0\subset\aeq{C}$ such that $e\indi{P}_{C_0}C$. Assume that $C_0'\subset \aeq{C}$ is a finite subset over which $\tP{e/C}$ also does not fork. Then by Theorem~\ref{T:nfequivrk}, $\R{e/C_0}=\R{e/C_0C_0'}=\R{e/C_0}$. Hence $\R{e/C}$ is well-defined. On the other hand $\R{e/C}=\min\{\R{e/C_0}\mid C_0\overset{\text{finite}}{\subset}C\}$.
	\begin{cor}\label{C:gRequivfrk}
		Let $(M,P)\models T_P$. Let $e\in(M,P)\eq$ and $B\subset C\subset(M,P)\eq$. Then $\R{e/B}=\R{e/C}$ if and only if $e\indi{P}_{B}C$.
	\end{cor}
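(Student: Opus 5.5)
Corollary~\ref{C:gRequivfrk} reduces to Theorem~\ref{T:nfequivrk} once $B$ and $C$ are replaced by suitable finite subsets of their algebraic closures. By $\omega$-stability of $T_P\eq$, choose a finite $B_0\subset\aeq{B}$ with $e\indi{P}_{B_0}B$. Then choose a finite $C_0\subset\aeq{C}$ with $e\indi{P}_{C_0}C$, and enlarge it to contain $B_0$; enlarging by parameters from $\aeq{C}$ preserves non-forking over $C$. By the definition of rank over infinite sets,
$$\R{e/B}=\R{e/B_0},\qquad \R{e/C}=\R{e/C_0}.$$
Since $B_0\subset C_0$, write $C_0=B_0D$ with $D$ finite.

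By Theorem~\ref{T:nfequivrk}, applied with $e_1=e$, $e_2=B_0$, $e_3=D$,
$$\R{e/C_0}=\R{e/B_0D}\le\R{e/B_0},$$
with equality if and only if $e\indi{P}_{B_0}C_0$. It therefore suffices to prove
$$e\indi{P}_{B}C\iff e\indi{P}_{B_0}C_0.$$
This is routine forking calculus in the stable theory $T_P\eq$, using three facts: $e\indi{P}_{B_0}B$, $e\indi{P}_{C_0}C$, and that non-forking is insensitive to passing to algebraic closures.

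For $(\Rightarrow)$, assume $e\indi{P}_{B}C$. Transitivity with $e\indi{P}_{B_0}B$ gives $e\indi{P}_{B_0}C$, hence $e\indi{P}_{B_0}\aeq{C}$. Monotonicity then gives $e\indi{P}_{B_0}C_0$.

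For $(\Leftarrow)$, assume $e\indi{P}_{B_0}C_0$. Combined with $e\indi{P}_{C_0}C$ (equivalently $e\indi{P}_{C_0}\aeq{C}$), transitivity yields $e\indi{P}_{B_0}\aeq{C}$. Monotonicity then gives $e\indi{P}_{B_0B}C$, and since $B_0\subset\aeq{B}$ this is $e\indi{P}_{B}C$.

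The main point requiring care is well-definedness. The definition of $\R{e/C}$ depends on choosing a finite $C_0$ over which $\tP{e/C}$ does not fork, so we need to be allowed to choose $C_0\supseteq B_0$. This is justified by the remark following the definition: any two admissible finite sets give the same value, by Theorem~\ref{T:nfequivrk}. Everything else follows directly from Theorem~\ref{T:nfequivrk} together with standard properties of forking in stable theories.
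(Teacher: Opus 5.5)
Your proposal is correct and follows essentially the same route as the paper: fix a finite $B_0\subset\aeq{B}$ with $e\indi{P}_{B_0}B$, reduce to Theorem~\ref{T:nfequivrk}, and transfer the independence statement by standard forking calculus. The only difference is that you make explicit what the paper's phrase ``from the analysis above'' leaves implicit. That is, you choose the finite $C_0\subset\aeq{C}$ so that it contains $B_0$, and you check $e\indi{P}_{B}C\iff e\indi{P}_{B_0}C_0$ in both directions.
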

	\begin{proof}
		Take $B_0\subset \aeq{B}$ finite such that $e\indi{P}_{B_0}B$, then $\R{e/B}=\R{e/B_0}$ by definition. From the analysis above, we know $\R{e/C}=\R{e/B_0}$ if and only if $e\indi{P}_{B_0}C$ thus if and only if $e\indi{P}_BC$.
	\end{proof}
	By comparison with Fact~\ref{F:Amador}, we see that for real tuples $a$ and real parameter sets $C$, the geometric rank $\R{a/C}=\SUP{a/C}$. In fact, we have the following corollary. 
	\begin{cor}\label{C:gRa=SUa}
		Let $C\subset(M,P)\eq$ and $a\in M$ be a real tuple. Then $\R{a/C}=\SUP{a/C}$.
	\end{cor}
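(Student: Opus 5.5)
The plan is to reduce first to the case where $C$ is a set of real tuples, and then compare the geometric rank with Fact~\ref{F:Amador} directly.

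\emph{Reduction.} By Corollary~\ref{C:gRequivfrk} and the definition of $\R{a/C}$, we may replace $C$ by a finite $C_0\subset\aeq{C}$ with $a\indi{P}_{C_0}C$. Forking does not change SU-rank, so $\SUP{a/C}=\SUP{a/C_0}$, and we may assume $C=e$ is a single imaginary. By Fact~\ref{F:Pillay-gei} we may take $e$ of Pillay form, say $e=\code{G_b(P)*c}$ with $c$ real. Both ranks are invariant under interalgebraicity, so this costs nothing.

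\emph{Real base.} Next pass to a real base. Choose $c\in G_b(P)*c$ with $c\indi{P}_{e}a$, which is possible by extension. Then $\R{a/e}=\R{a/ec}=\R{a/c}$ by Theorem~\ref{T:nfequivrk}, and likewise $\SUP{a/e}=\SUP{a/c}$, since $e\in\deq{c}$. This reduces the corollary to real $a$ and real finite $c$. By additivity it then suffices to show $\R{d}=\SUP{d}$ for every real tuple $d$: indeed $\R{a/c}=\R{ac}-\R{c}$ by definition, and $\SUP{a/c}=\SUP{ac}-\SUP{c}$ by the Cantor additivity of Poizat's formula. Both sides are values of $\omega\cdot\td{-/P}+\td{-^c}$, so the subtraction is legitimate in $\mathbb{Z}\times_{\text{lex}}\mathbb{Z}$.

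\emph{Single real tuple.} Finally, compute $\R{d}$ for real $d$. Note that $(d,d^c)$ is $P$-independent and interdefinable with $d$. The real tuple $d$ is the code of a homogeneous torsor under the trivial group: $X=\{d\}$, $G$ trivial, $\dim G=0$, and $\overline X^P=\mathrm{Loc}(d/P)$. By Remark~\ref{R: gRcalculation},
\[
\R{d}=\omega\cdot\td{d/P}+\td{\B{d}}.
\]
Since $d\ind_{d^c}P$, Fact~\ref{F: lovelypairs}(2) gives $\B{d}=\acl{d^c}$. Hence $\R{d}=\omega\cdot\td{d/P}+\td{d^c}$, which equals $\SUP{d}$ by Fact~\ref{F:Amador} with $C=\emptyset$ (working over the prime model, or adding parameters).

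The main obstacle is making sure the trivial-group torsor $\{d\}$ really is of Pillay form, or at least that Remark~\ref{R: gRcalculation} applies to it. Homogeneity is trivial here, and the Proposition characterising Pillay form via homogeneous torsors covers the trivial connected group. A second point to check is that the subtraction in the additivity step matches Poizat's additivity exactly; this holds because both ranks are given by the same two-coordinate formula.
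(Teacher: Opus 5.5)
Your argument is correct, but it takes a different route from the paper. The paper uses no additivity and no real finite base. It takes a model $M_0\supseteq C$ and a non-forking extension $a'$ of $\tP{a/C}$ to $M_0$. Then $\R{a/C}=\R{a'/M_0}$ by Corollary~\ref{C:gRequivfrk}, and $\SUP{a/C}=\SUP{a'/M_0}$. It then compares with Fact~\ref{F:Amador} directly, since that formula is stated exactly for model bases, so the imaginaries in $C$ are absorbed into the model in one step.

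You instead shrink $C$ to a finite imaginary and replace it by a real point $c$ of the coded torsor, chosen $P$-independent from $a$. You then split both ranks additively, reducing to absolute ranks of single real tuples, and compute $\R{d}=\omega\cdot\td{d/P}+\td{d^c}$ via the trivial-group torsor $\{d\}$ and $\B{d}=\acl{d^c}$. This makes explicit the computation of $\mathrm{gR}$ on real tuples, which the paper's ``by comparison with Fact~\ref{F:Amador}'' leaves implicit.

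Your route costs two inputs that the paper's route avoids:
\begin{enumerate}
\item You need $\SUP{ac}=\SUP{a/c}\oplus\SUP{c}$ (natural, i.e.\ coordinatewise, sum) for real tuples over the non-model bases $\emptyset$ and $c$. The paper only attributes this to Poizat, and it is not a direct consequence of Fact~\ref{F:Amador} as stated, so you are using it as a black box.
\item Fact~\ref{F:Amador} does not apply with $C=\emptyset$. Your parenthetical ``working over the prime model, or adding parameters'' should be replaced by an actual transfer. Take a small model $N$ and $d'\equiv^P d$ with $d'\indi{P} N$, so that $\SUP{d}=\SUP{d'/N}$. Then use Fact~\ref{F: lovelypairs}(4) together with the $P$-independence of $N$ to check that $\td{d'/NP}=\td{d/P}$ and $\td{(d'N)^c/N}=\td{d^c}$.
\end{enumerate}

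Finally, the detour through Pillay form for the base is unnecessary. Any finite imaginary lies in $\deq{c}$ for some real tuple $c$, and you can choose such a $c$ to be $P$-independent from $a$ over that imaginary directly.
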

	\begin{proof}
		Let $(M_0,P(M_0))\preceq(M,P)$ be a model containing $C$. We take $a'\equiv_C^P a$ such that $a'\indi{P}_{C} M_0$. Then 
		$$\R{a/C}=\R{a'/C}=\R{a'/M_0}=\SUP{a'/M_0}=\SUP{a/C}.$$
	\end{proof}
	Corollary~\ref{C:gRa=SUa} shows that the geometric rank coincides with SU-rank on real tuples. 
	We now collect the main properties established in this section, which together show that the geometric rank behaves as a well-behaved independence rank on $(M,P)^{\eq}$.
	\begin{thm}\label{T:propertieslist}
		Let $a,b\in (M,P)^{\eq}$ be finite tuples and $B,C\subset (M,P)^{\eq}$. Then,
		\begin{itemize}
			\item If $aB\equiv^P bC$, then $\R{a/B}=\R{b/C}$.\hfill(Aut-invariance)
			\item $\R{a/B}=\R{a/\aeq{B}}$.\hfill (Base closure)
			\item If $\aeq{aB}=\aeq{bB}$, then $\R{a/B}=\R{b/B}$.\hfill(Alg-invariance)
			\item $\R{ab/B}=\R{a/bB}+\R{b/B}$.\hfill(Additivity)
			\item If $B\subset C$, then $\R{a/C}\leq\R{a/B}$.\hfill(Monotonicity)
			\item $\R{a/B}=0$ if and only if $a\in\aeq{B}$.\hfill (Anti-reflexivity)
			\item $a\indi{P}_BC$ if and only if $\R{a/B}=\R{a/C}$.\hfill (Forking-characterization)
			\item $\R{a/C}=\min\{\R{a/C_0}\mid C_0\subset\aeq{C},\, C_0\text{ finite}\}.$\hfill(Local character)
			\item If $a\in M$, $\R{a/B}=\SUP{a/B}$.\hfill(Compatibility with SU-rank)
		\end{itemize}
	\end{thm}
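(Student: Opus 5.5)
Most items in Theorem~\ref{T:propertieslist} follow by unwinding the definitions and quoting earlier results, so I would take them one at a time. The work is in checking that the definition of $\R{a/B}$ for infinite $B$ agrees with the finite case, and in passing from finite tuples to sets.

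\emph{Aut-invariance, base closure, alg-invariance.} Aut-invariance holds because the Pillay-form data $(\td{a/P},\td{\B e},\dim G_b)$ is preserved by $L_P$-automorphisms. If the type $aB$ is merely equal to that of $bC$ (with no ambient automorphism), I would move to a saturated extension, where an automorphism carries $aB$ to $bC$.

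For base closure, choose $C_0\subset\aeq{B}$ finite with $a\indi{P}_{C_0}\aeq{B}$. Then also $a\indi{P}_{C_0}B$, so both sides are computed from the same $C_0$.

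For alg-invariance, pick finite $C_0\subset\aeq B$ witnessing non-forking for both $a$ and $b$ over $B$. This is possible because $a$ and $b$ are interalgebraic over $B$, so each forks over a set exactly when the other does. Then $aC_0$ and $bC_0$ are interalgebraic, so Theorem~\ref{T:welldefined} gives $\R{aC_0}=\R{bC_0}$, and subtracting $\R{C_0}$ finishes.

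\emph{Additivity.} First take $B$ finite. Then
\[
\R{ab/B}=\R{abB}-\R B=(\R{abB}-\R{bB})+(\R{bB}-\R B),
\]
which is exactly $\R{a/bB}+\R{b/B}$; this is telescoping in the group $\mathbb Z\times_{\text{lex}}\mathbb Z$. For infinite $B$, choose one finite $B_0\subset\aeq B$ with $ab\indi{P}_{B_0}B$. Then $a\indi{P}_{bB_0}B$ and $b\indi{P}_{B_0}B$ both follow by forking calculus, so all three ranks are computed from $B_0$ (respectively $bB_0$) and the finite case applies.

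\emph{Monotonicity, forking characterization, local character.} These are Corollary~\ref{C:gRequivfrk} together with Theorem~\ref{T:nfequivrk}. For monotonicity, choose finite $C_0\supseteq B_0$ inside $\aeq C$, where $B_0$ witnesses non-forking over $B$ and $C_0$ witnesses it over $C$. Then $\R{a/C}=\R{a/C_0}\le\R{a/B_0}=\R{a/B}$. The forking characterization for $B\subset C$ is Corollary~\ref{C:gRequivfrk}; if $B\not\subset C$, I would read the statement as being about $BC$. Local character is the remark made just after the definition of $\R{e/C}$.

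\emph{Anti-reflexivity.} By the forking characterization, $\R{a/B}=0$ means $\R{a/B}=\R{a/Ba}$, since $\R{a/Ba}=0$ trivially. This gives $a\indi{P}_B a$, which in a stable theory forces $a\in\aeq B$. Conversely, if $a\in\aeq B$, alg-invariance with $b=\emptyset$ gives $\R{a/B}=\R{\emptyset/B}=0$.

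\emph{Compatibility with SU-rank.} This is Corollary~\ref{C:gRa=SUa}.

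The main obstacle is the bookkeeping for infinite parameter sets. One must show that a single finite $C_0$ can witness non-forking for several tuples at once, which is done by taking unions of finite witnesses and using base monotonicity. One must also check that every rank is computed over a finite base inside $\aeq{\cdot}$, so that the purely finite identities can be applied.
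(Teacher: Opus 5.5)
Your outline matches the paper's proof item by item: the definitions give aut-invariance, base closure and local character; Corollaries~\ref{C:gRequivfrk} and~\ref{C:gRa=SUa} give forking characterization and compatibility; a finite base plus Theorem~\ref{T:welldefined} and Proposition~\ref{P:gRequivstar} give alg-invariance and monotonicity; a single $B_0$ with $ab\indi{P}_{B_0}B$ gives additivity; and $\R{a/aB}=0$ gives anti-reflexivity. Your monotonicity, additivity and anti-reflexivity arguments are sound. The one step that fails as written is in alg-invariance.

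\textbf{The justification is false.} You claim that $a$ and $b$, being interalgebraic over $B$, fork over exactly the same subsets. Take independent generics $a,c\in M$, $B=\{c\}$ and $b=(a,c)$. Then $a\indi{P}_{\emptyset}B$ but $b\nindi{P}_{\emptyset}B$.

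\textbf{The inference also fails.} A $C_0$ that witnesses non-forking for $a$ and for $b$ \emph{separately} need not make $aC_0$ and $bC_0$ interalgebraic. Your last paragraph suggests exactly such a $C_0$, namely a union of separate witnesses. Take instead $b=a+c$. Then $\SUP{b/c}=\SUP{a/c}=\omega$, which is the maximal rank of a single element, so $b\indi{P}c$ as well as $a\indi{P}c$. Thus $C_0=\emptyset$ works for each tuple separately. Yet $a$ and $a+c$ are not interalgebraic, since otherwise $c\in\aeq{a}$. So Theorem~\ref{T:welldefined} cannot be applied to $aC_0$ and $bC_0$.

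\textbf{The repair} is the device you already use for additivity: choose a finite $C_0\subset\aeq{B}$ with $ab\indi{P}_{C_0}B$. Then $b\indi{P}_{aC_0}B$. Since $b\in\aeq{aC_0B}$, this gives $b\indi{P}_{aC_0}b$, i.e.\ $b\in\aeq{aC_0}$. Symmetrically $a\in\aeq{bC_0}$. Now Theorem~\ref{T:welldefined} yields $\R{aC_0}=\R{bC_0}$. Subtracting $\R{C_0}$ gives $\R{a/B}=\R{a/C_0}=\R{b/C_0}=\R{b/B}$.
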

	\begin{proof}
		Aut-invariance, base closure and local character are immediate from the definition. Forking characterization and compatibility follow from Corollary~\ref{C:gRequivfrk} and \ref{C:gRa=SUa}. 
		
		By taking a suitable finite subset, alg-invariance and monotonicity follow easily from Theorem~\ref{T:welldefined} and Proposition~\ref{P:gRequivstar}. 
		
		For additivity, choose $B_0$ finite such that $ab\indi{P}_{B_0}B$. Then $a\indi{P}_{bB_0}B$ and $b\indi{P}_{B_0}B$, and hence $\R{ab/B}=\R{ab/B_0}=\R{a/bB_0}+\R{b/B_0}=\R{a/bB}+\R{b/B}$. 
		
		For anti-reflexivity, note that by definition $\R{a/aB}=0$. Thus $\R{a/B}=0$ iff $\R{a/B}=\R{a/aB}$. By the forking characterization, this is equivalent to $a\indi{P}_B a$, which holds iff $a\in\aeq{B}$.
	\end{proof}
	\begin{rmk}
		Although our construction of geometric rank was carried out in the setting of pairs of ACF, the definition of geometric rank and its role in witnessing non-forking do not depend on specific features of ACF. The argument only uses elimination of imaginaries and stability, together with the fact that algebraic closure is a model—a property also used in Pillay’s proof of geometric elimination of imaginaries for pairs of ACF. In particular, the same construction applies to beautiful pairs of models of any strongly minimal theory with EI in which the algebraic closure of the empty set is infinite. (Fact~\ref{F:Pillay-gei} continues to hold in that context.)
		
		For Theorem~\ref{T:Pillay-alg}, the analysis of interalgebraicity for imaginaries of Pillay form remains valid in any stable theory. However, in the general stable context, it is not known whether every imaginary is interalgebraic with one of Pillay form.
		
		More generally, it is natural to ask to what extent the assumption that algebraic closure is a model is essential for the above constructions.
	\end{rmk}
	\bibliographystyle{amsalpha}
	\bibliography{aty}	
\end{document}